\DeclareMathOperator{\AGL}{AGL}
\DeclareMathOperator{\Sym}{Sym}
\DeclareMathOperator{\Stab}{Stab}
\newcommand\deq{\mathrel{\stackrel{\makebox[0pt]{\mbox{\normalfont\tiny def}}}{=}}}
\newcommand{\Size}[1]{\left\lvert #1 \right\rvert}
\newcommand{\Span}[1]{\left\langle#1\right\rangle}
\newcommand{\Set}[1]{\left\lbrace #1 \right\rbrace}
\let\phi\varphi
\theoremstyle{plain}
\newtheorem{theorem}{Theorem}[section]
\newtheorem{lemma}[theorem]{Lemma}
\newtheorem{proposition}[theorem]{Proposition}
\newtheorem{conjecture}{Conjecture}
\newtheorem{openprob}{Open Problem}
\theoremstyle{remark}
\newtheorem{remark}{Remark}
\theoremstyle{definition}
\newtheorem{definition}[theorem]{Definition}
\newtheorem*{notation*}{Notation}
\lstdefinelanguage{GAP}{%
 morekeywords={%
 Assert,Info,IsBound,QUIT,%
 TryNextMethod,Unbind,and,break,%
 continue,do,elif,%
 else,end,false,fi,for,%
 function,if,in,local,%
 mod,not,od,or,%
 quit,rec,repeat,return,%
 then,true,until,while%
 },%
 sensitive,%
 morecomment=[l]\#,%
 morestring=[b]",%
 morestring=[b]',%
}[keywords,comments,strings]
\begin{document}
\title[A Chain of Normalizers in the Sylow $2$-subgroups of
SYM$(2^n)$]{A Chain of Normalizers in the Sylow $2$-subgroups of the
 symmetric group on $2^n$ letters} \author[R.~Aragona]{Riccardo
 Aragona}

\author[R.~Civino]{Roberto Civino}
% \address{Dipartimento di Ingegneria e Scienze dell'Informazione e Matematica\\
%	Universit\`a degli Studi dell'Aquila\\
%	via Vetoio\\
%	I-67100 Coppito (AQ)\\
%	Italy}

\author[N.~Gavioli]{Norberto Gavioli}
% \address{Dipartimento di Ingegneria e Scienze dell'Informazione e Matematica\\
%	Universit\`a degli Studi dell'Aquila\\
%	via Vetoio\\
%	I-67100 Coppito (AQ)\\
%	Italy}

\author[C.~M.~Scoppola]{Carlo Maria Scoppola}
% \address{Dipartimento di Ingegneria e Scienze dell'Informazione e Matematica\\
%	Universit\`a degli Studi dell'Aquila\\
%	via Vetoio\\
%	I-67100 Coppito (AQ)\\
%	Italy}
\address{DISIM \\
 Universit\`a degli Studi dell'Aquila\\
 via Vetoio\\
 I-67100 Coppito (AQ)\\
 Italy}       \email{riccardo.aragona@univaq.it}
\email{roberto.civino@univaq.it} \email{norberto.gavioli@univaq.it}
\email{carlo.scoppola@univaq.it}

\date{} \thanks{All the authors are members of INdAM-GNSAGA
 (Italy). R. Civino is partially funded by the Centre of excellence
 ExEMERGE at University of L'Aquila. Part of this work has been
 carried out during the cycle of seminars ``Gruppi al Centro''
 organized at INdAM in Rome.}

\subjclass[2010]{20B30, 20B35, 20D20} \keywords{Symmetric group on
 $2^n$ elements; Elementary abelian regular subgroups; Sylow
 $2$-subgroups; Normalizers}

\begin{abstract}
 On the basis of an initial interest in symmetric cryptography, in
 the present work we study a chain of subgroups. Starting from a
 Sylow $2$-subgroup of $\AGL(2,n)$, each term of the chain is defined
 as the normalizer of the previous one in the symmetric group on
 $2^n$ letters. Partial results and computational experiments lead us
 to conjecture that, for large values of $n$, the index of a
 normalizer in the consecutive one does not depend on $n$. Indeed,
 there is a strong evidence that the sequence of the logarithms of
 such indices is the one of the partial sums of the numbers of
 partitions into at least two distinct parts.
\end{abstract}

\maketitle

%%%%%%%%%%%%%%%%%%%%%%%%%%%%%%%%%%%%%%%%%%%%%%%%%%%%%%%%%%%%%%
%%%%%%%%%%%%%%%%%%%%%%%%%%%%%%%%%%%%%%%%%%%%%%%%%%%%%%%%%%%%%%
%%%%%%%%%%%%%%%%%% S E C T I O N 1 %%%%%%%%%%%%%%%%%%%%%%%%%%%
%%%%%%%%%%%%%%%%%%%%%%%%%%%%%%%%%%%%%%%%%%%%%%%%%%%%%%%%%%%%%%
%%%%%%%%%%%%%%%%%%%%%%%%%%%%%%%%%%%%%%%%%%%%%%%%%%%%%%%%%%%%%%

\section{Introduction}
Let $n$ be a non-negative integer and let $\Sym(2^n)$ denote the
symmetric group on $2^n$ letters. The study of the conjugacy class in
$\Sym(2^n)$ of the elementary abelian regular $2$-subgroups has
recently drawn attention for its application to block cipher
cryptanalysis, and in particular to differential cryptanalysis
\cite{bih91}. The reader which is familiar with symmetric cryptography
will not find hard to realize that the key-addition layer of a block
cipher (see e.g.~\cite{aes,present,des}) acts in general on the partially
encrypted states as an elementary abelian regular $2$-subgroup of the
message space. In a recent paper \cite{calderini2017}, it has been
shown that a cryptanalyst can derive from such subgroups new
operations on the message space $\mathbb F_2^n$ of the block cipher,
which can be used to perform algebraic and statistical
attacks. Indeed, although the encryption functions, in order to be
secure, are designed to be far from being linear with respect to the
classical bitwise addition modulo 2, it is possible to attack the
encryption scheme by means of a variation of the classical
differential attack, where instead a newly designed operation is used
\cite{Civino2018}. Such operation is defined starting from a conjugate
of the translation group $T$ on the message space. 

A study of regular
subgroups of the affine group is  carried out in \cite{cds06,
 catino2009} by means of radical algebras.
We point out that there is   an interesting connection between our study of the \emph{position} of a regular subgroup in the symmetric group, in terms of the chain of normalizers defined below, and the rather recent theory of \emph{braces}, introduced in~\cite{rump2007braces}, since the above mentioned  new operation  can  be used to construct a brace on $T$. 
	Indeed, when $+$ and $\circ$ respectively denote the (additive) laws induced by
	$T$ and by one of its affine conjugates, the structure $( T, +, \circ)$ is 
	a two-sided brace and $( T, +, \cdot)$ is a radical ring, where $a\cdot b$ is defined 
	as $a+b+a\circ b$ for each $a, b \in T$. For an extensive survey and  detailed references  on braces  see e.g.~\cite{Cedo2018}.  
	\\
	
%	We intend to study the connections of $T$ as a brace with the chain of normalizers of the regular subgroup that defines that particular brace structure of $T$ in a future paper.

%
%  \rob{
% When $+$ and $\circ$ respectively denote the (additive) laws induced by
% $T$ and one of its affine conjugates, the structure $(\mathbb F_2^n, +, \circ)$ is 
% a two-sided brace~\cite{rump2007braces} and $(\mathbb F_2^n, +, \cdot)$ is a radical ring, where $a\cdot b$ is defined 
% as $a+b+a\circ b$ for each $a, b \in \mathbb F_2^n$. [TBC...]\\
% }

In a recent paper~\cite{Aragona2019}, we considered the elements
of the conjugacy class $T^{\Sym(2^n)}$ which are subgroups of the affine group
$\AGL(T)$. We showed that, if ${T^g \cap T }$ has index 4 in $T$, then
there exists a Sylow $2$-subgroup $U < \AGL(T)$ containing both $T^g$
and $T$ as normal subgroups. The normalizer $N^1$ of $U$ in
$\Sym(2^n)$ contains $U$ as a subgroup of index 2 and interchanges $T$
and $T^g$ by conjugation. The $2$-group $N^1$ is therefore contained
in a Sylow $2$-subgroup $\Sigma$ of $\Sym(2^n)$. Motivated by a
computational evidence, we prove here that this is the general
behavior. We define a chain starting from $U$ and where the $k$-th
term $N^k$ is the normalizer in $\Sym(2^n)$ of the previous
$N^{k-1}$. We show that $N^k$ is actually the normalizer of $N^{k-1}$
in $\Sigma$, and thus the $N^k$s form a sequence of $2$-groups ending
at $\Sigma$. Philip Hall, indeed, proved that $\Sigma$ is
self-normalizing~(see e.g.\ \cite{Carter1964}). Using the software
package \textsf{GAP}~\cite{GAP4}, we computed the normalizer chain for
$n \leq 11$.  We experimentally
noticed that the sequence defined by $c_k = \log_2| N^k : N^{k-1}|$
does not depend on $n$ if $k\leq n-2$ and, in such cases, $\{c_k\}_{k \geq 1}$
represents the sequence of partial sum of the sequence $\{b_{k+2}\}_{k \geq 1}$, where $b_k$ counts the
number of partitions of $k$ into at least two distinct parts, a
well-known    sequence    of    integers
\cite[\url{https://oeis.org/A317910}]{OEIS}, also appearing in
commutative algebra problems \cite{Enkosky2014}. 
For larger values of $n$, the computational problem
becomes intractable using the standard libraries, and so its
investigation requires a theoretical approach.
For small values of $k$, by way of an elementary but increasingly cumbersome analysis,  we show that the previous claim is true. 
 In the
general case, the claim remains an open problem.
We believe that  more sophisticated combinatorial and group theoretical tools  could prove  that, for $k\le n-2$, the integers   $c_k$ do not depend on $n$ 
and are related to the sequence $b_k$ as previously mentioned.
\\

The paper is organized as follows: in Sec.~\ref{sec:one} we introduce
the notation and provide some preliminary results. The normalizer
chain is defined in Sec.~\ref{sec_two}, which contains the main
considerations that led us to formulate Conjecture~\ref{conj:main}. Some theoretical evidence in support of our
conjecture, i.e.\ Theorem~\ref{thm:main},  is proved in
Sec.~\ref{sec:evi}, where we also discuss some  open problems. To
conclude, Sec.~\ref{sec_construction} is devoted to the computational
aspects and contains the \textsf{GAP} code used for our computations.

\section{Notation and preliminaries}\label{sec:one}
In this section, we recall some well known facts and a preliminary
result on the imprimitivity action of subgroups of the symmetric group
on a finite set.
\begin{definition}
 Let $\Omega\ne \emptyset$ and let $G\le \Sym(\Omega)$ be a
 transitive permutation group. An imprimitivity system $\mathcal B$
 for $G$ is a $G$-invariant partition of $\Omega$. The group $G$ is
 primitive if $G$ has only the trivial partitions $\{\Omega\}$ and
 the set of the singletons of $\Omega$ as imprimitivity systems.
 Otherwise, $G$ is said to be imprimitive.
\end{definition}
 
 \begin{definition}
 Let $G$ act imprimitively on the set $\Omega$. An imprimitivity
 chain $\mathcal{B}_0 \succ \cdots \succ \mathcal{B}_t$ of depht $t$
 is a sequence of imprimitivity systems for $G$ acting on $\Omega$,
 where $\mathcal{B}_0$ and $\mathcal{B}_t$ are the trivial
 partitions. We also require that for each $B\in \mathcal{B}_{m+1}$
 there exists $ B' \in \mathcal{B}_{m}$ such that $B\subset {B'}$
 for $0\leq m \leq t-1$.
 \end{definition}
 Note   that   the   imprimitivity   chain
 $\mathcal{B}_0 \succ \cdots \succ \mathcal{B}_t$ can be represented
 by its \emph{imprimitivity tree} which is the rooted tree $(V,E)$,
 where
 \begin{itemize}
 \item the set of vertices $V$ is $\bigcup_{m=0}^t\mathcal{B}_m$, more precisely a vertex is a subset of $\Omega$ belonging to some partition $\mathcal{B}_i$ 
 and
 the root vertex is $\Omega$;
 \item two vertices $X$ and $Y$ in $V$ are connected by an edge
 $e\in E$ if and only if there exists $m$ such that
 $X\in \mathcal{B}_{m}$, $Y\in \mathcal{B}_{m+1}$ and $Y\subset X$.
 \end{itemize}

 In the remainder of this work, we will consider the special case of a
 subgroup $G$ of the symmetric group $\Sym(X_n)$, where
 $X_n\deq \{1,\ldots,2^n\}$.\\
 
 For $0 \leq m\leq n$ and $0 \leq k \leq 2^m-1$, the following
 notation is used:
 \begin{itemize}
 \item $B^n_{m,k}\deq \Set{k2^{n-m}+1,\ldots ,(k+1)2^{n-m}}$, and in
 particular $X_n=B^n_{0,0}$;

 \item $\mathcal{B}^n_m\deq \Set{ B^n_{m,0},\ldots , B^n_{m,2^m-1}} $;

 \item for $1 \leq i \leq n$
 \[s_i\deq \prod_{j=1}^{2^{i-1}}(j,j+2^{i-1});\]
 
 \item $t^n_i\deq \begin{cases}
  s_i & \text{if $i=n$}\\
  t_{i}^{n-1}\cdot (t^{n-1}_{i})^{s_n} & \text{if $1\le i < n$}.
 \end{cases}$
 
 \end{itemize}

 The symmetric group $\Sym(2^n)$ acts on the set of partitions of
 $X_n$ and, with respect to this action, we define the subgroup
 \[\Sigma_n\deq \bigcap_{m=1}^n\Stab_{\Sym(2^n)}(\mathcal{B}^n_m)=
 \Span{s_1,\ldots, s_n} \cong \wr_{i=1}^n C_2,\] which is the $n$-th
 iterated wreath product of copies of the cyclic group $C_2$ of order
 2, i.e.\ a Sylow $2$-subgroup of $\Sym(2^n)$.
 % (\nor{mettere eventuale citazione}).
 Notice that $\mathcal{B}^n_0 \succ \cdots \succ \mathcal{B}^n_n$ is
 an imprimitivity chain $\mathcal{C}_n$ of maximal depth for
 $\Sigma_n$ and that $\Sigma_n$ is the stabilizer of $\mathcal{C}_n$
 in $\Sym(2^n)$.\\

 Let $T_{n,0} \deq \Set{1}$ and, for $1 \leq i \leq n$, let us define
 $T_{n,i}\deq   \Span{t^n_1,\ldots,t^n_i}$.   Clearly
 $T_{n,i}\le T_{n,i+1}$, for $0 \leq i \leq n-1$. The group
 $T_n\deq T_{n,n}$ is a regular elementary abelian subgroup of
 $\Sym(2^n)$ of order $2^n$ contained in $\Sigma_n$, whose normalizer in $\Sym(2^n)$ is $\AGL(T_n)$, the affine general linear group. We also define
 \[
 U_n \deq \AGL(T_n)\cap \Sigma_n =N_{\Sigma_n}(T_n).
 \]
 The group $T_n$ is a uniserial module for $U_n$ whose \emph{maximal
 flag} $\mathcal{F}_n$ is defined as
 \begin{equation*}
 \Set{1}=T_{n,0} <\cdots< T_{n,n}=T_n.
 \end{equation*}
 
 \noindent Given a subgroup $H\leq \Sigma_{n-1}$, we define the \emph{diagonal
 embedding} of $H$ into $\Sigma_n$ as
 \[
 \Delta_n(H) \deq \{ (x,x^{s_n}) \mid x\in H\}.
 \]
 
 \begin{remark}\label{lem:one}
 It was already known to Dixon~\cite{Dixon1971} that the set of
 elementary abelian regular subgroups of $\Sym(2^n)$ form a unique
 conjugacy class. Moreover, a transitive abelian subgroup of
 $\Sym(2^n)$ is regular and so is self-celtralizing. In particular,
 $(T_n)^g$ is self-centralizing in $\Sigma_n$, for every
 $g\in \Sym(2^n)$.
 \end{remark}
 
 \begin{lemma}
 Up to conjugation by elements of $\Sigma_{n}$, the group $ T_n$ is
 the unique elementary abelian regular subgroup of $\Sym(2^n)$
 having $\mathcal{C}_n$ as imprimitivity chain.
 \end{lemma}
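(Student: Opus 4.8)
The plan is to argue by induction on $n$, exploiting the wreath decomposition $\Sigma_n=(\Sigma_{n-1}\times\Sigma_{n-1})\rtimes\langle s_n\rangle$, in which the two direct factors act on the halves $B^n_{1,0}$ and $B^n_{1,1}$ and $s_n$ interchanges them. The case $n=1$ is immediate, since $\Sym(2)=C_2$ has a unique regular subgroup. For the inductive step, let $E$ be an elementary abelian regular subgroup of $\Sym(2^n)$ having $\mathcal{C}_n$ as imprimitivity chain. Because $\Sigma_n$ is \emph{precisely} the stabiliser of $\mathcal{C}_n$ in $\Sym(2^n)$, the hypothesis already gives $E\le\Sigma_n$; in particular $E$ stabilises $\mathcal{B}^n_1$. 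The first step is to isolate $E_0\deq E\cap(\Sigma_{n-1}\times\Sigma_{n-1})$, the elements fixing each half setwise. Since $E$ is transitive and interchanges the two blocks, $E_0$ has index $2$ in $E$, hence order $2^{n-1}$.

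Next I would analyse the two coordinate projections $\pi_1,\pi_2$ of $E_0$. Each $\pi_j$ is injective because $E$ is regular, hence semiregular: a nontrivial element of $E_0$ lying in $\ker\pi_1$ would fix $B^n_{1,0}$ pointwise, contradicting fixed-point-freeness. Each $\pi_j$ is onto a transitive subgroup, since any two points of a half are joined by an element of $E$ that must preserve that half and so lies in $E_0$. As $|E_0|=2^{n-1}$, each $\pi_j(E_0)$ is an elementary abelian \emph{regular} subgroup of $\Sym(2^{n-1})$, and the restriction of $\mathcal{C}_n$ to each half is a copy of $\mathcal{C}_{n-1}$. By the inductive hypothesis, after conjugating $E$ by a suitable element $(g_1,g_2)\in\Sigma_{n-1}\times\Sigma_{n-1}\le\Sigma_n$ I may assume $\pi_1(E_0)=\pi_2(E_0)=T_{n-1}$. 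Then $E_0=\{(x,\phi(x)):x\in T_{n-1}\}$ is the graph of the automorphism $\phi=\pi_2\circ\pi_1^{-1}\in\operatorname{Aut}(T_{n-1})$.

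It then remains to pin down $E$ using that it is elementary abelian and to straighten it onto $T_n=\langle\Delta_n(T_{n-1}),s_n\rangle$. Writing a swapping element as $\sigma=(a,b)s_n$, the relation $\sigma^2=1$ forces $b=a^{-1}$, and commutation of $\sigma=(a,a^{-1})s_n$ with every $(x,\phi(x))\in E_0$ forces $\phi(x)=a^{-1}xa$ for all $x\in T_{n-1}$. In particular $a^{-1}T_{n-1}a=T_{n-1}$, so $a\in N_{\Sigma_{n-1}}(T_{n-1})=U_{n-1}$. A direct computation then shows that conjugation by $(1,a)\in\{1\}\times U_{n-1}\le\Sigma_n$ sends $E_0$ to the diagonal $\Delta_n(T_{n-1})$ and $\sigma$ to $s_n$, whence the conjugate of $E$ equals $T_n$. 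Since every conjugating element used lies in $\Sigma_n$, this yields uniqueness up to $\Sigma_n$-conjugacy.

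The main obstacle, I expect, is not any single computation but keeping the whole argument confined to $\Sigma_n$: the content of the lemma is exactly the refinement of the global conjugacy of elementary abelian regular subgroups to conjugacy \emph{inside the Sylow subgroup}. Concretely, the delicate points are extracting from the bare hypotheses the rigid description of $E_0$ as the graph of an automorphism with regular projections, and then checking that the involutive and commuting constraints leave just enough freedom for a \emph{base-group} conjugation—rather than an arbitrary element of $\Sym(2^n)$—to complete the straightening. Verifying at each stage that the chosen conjugator preserves $\mathcal{C}_n$, so that the induction feeds back correctly, is the crux.
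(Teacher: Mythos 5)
Your proof is correct and follows essentially the same route as the paper's: induction on $n$ via the top block system $\mathcal{B}^n_1$, identification of the index-two block stabilizer $E_0$ with a $\Sigma_{n-1}\times\Sigma_{n-1}$-conjugate of $\Delta_n(T_{n-1})$ by the inductive hypothesis, and then the analysis of a swapping element $\sigma=(a,b)s_n$ using $\sigma^2=1$ and the fact that $\sigma$ centralizes $E_0$, with $C_{\Sigma_{n-1}}(T_{n-1})=T_{n-1}$ pinning down $a$. The only cosmetic difference is that you straighten $E$ in two conjugation steps where the paper absorbs everything into the single conjugator $(h_1,h_2^{s_n})$.
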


 \begin{proof}
 First, recall that $\Sigma_n$ stabilizes $\mathcal{C}_n$ for every
 $n$. We argue by induction on $n$, the result being trivial when
 $n=1$. Let $T$ be an elementary abelian regular subgroup of
 $\Sym(2^n)$ having $\mathcal{C}_n$ as imprimitivity chain and let
 $M$  be  the  stabilizer  in  $T$  of
 $\{1,\ldots,2^{n-1}\}=B^n_{1,0}\in \mathcal{B}_1^n$. In particular,
 $M$       stabilizes       also
 $B^n_{1,1}=(B^n_{1,0})^{s_n}=\{2^{n-1}+1,\ldots,2^{n}\} $. The
 group $M$ acts on $B^n_{1,0}$ as an elementary abelian regular
 subgroup $M_1$ of $ S_{2^{n-1}}$ having $\mathcal{C}_{n-1}$ as
 imprimitivity chain. By induction, $M_1=(T_{n-1})^{h_1}$ for some
 $h_1\in \Sigma_{n-1}$. Similarly, the group $M$ acts faithfully on $B^n_{1,1}$ as an
 elementary abelian regular subgroup $M_2$ of $(S_{2^{n-1}})^{s_n}$
 having $(\mathcal{C}_{n-1})^{s_{n}}$ as imprimitivity chain, and
 thus we find by induction $M_2=((T_{n-1})^{h_2})^{s_n}$ for some
 $h_2\in \Sigma_{n-1}$. Finally, we have that
 \[M=\Set{(m^{h_1},m^{    h_2s_n})\mid    m\in
  T_{n-1}}=\Delta_n(T_{n-1})^{(h_1,h_2^{s_n})}.\]
 If $t \in T\setminus M$ then $t$ interchanges $B^n_{1,0}$ and
 $B^n_{1,1}$ and centralizes $M$. Let us write $t$ in the form
 $t=(a,b^{s_n})s_n$,  where  $a,b\in  \Sigma_{n-1}$  and
 $(m^{h_1},m^{h_2s_n})=(m^{h_1},m^{h_2s_n})^t=(m^{h_2b},(m^{h_1a})^{s_n})$.
 Note that
 \begin{itemize}
 \item $1=t^2= (a,b^{s_n})s_n(a,b^{s_n})s_n=(ab,(ba)^{s_n})$, and so
   $a=b^{-1}$;
 \item $m^{h_1}= m^{h_2b}$ and $m^{h_2 s_n}=m^{h_1 a s_n}$ for all
  $m\in T_{n-1}$, from which we derive
  \[h_1ah_2^{-1},h_2bh_1^{-1}         \in
  C_{\Sigma_{n-1}}(T_{n-1})=T_{n-1},\]      i.e.\
  $a=h_1^{-1}uh_2=u^{h_1}h_1^{-1}h_2$       and
  $b=a^{-1}=h_2^{-1}h_1u^{h_1}= u^{h_2}h_2^{-1}h_1$ for some
  $u \in T_{n-1}$.
 \end{itemize}
Then we have
 \begin{align*}
  t&=(a,b^{s_n})s_n=( u^{h_1}h_1^{-1}h_2 , u^{h_2 s_n}(h_2^{-1}h_1)^{s_n} )s_n \\
  &\equiv ( h_1^{-1}h_2 , (h_2^{-1}h_1)^{s_n} )s_n = s_n^{(h_1,h_2^{s_n})}\bmod M.
 \end{align*}	 
 Since  $T_n=\Delta_n(T_{n-1})\rtimes  \Span{s_n}$,  then
 $T=T_n^{(h_1,h_2^{s_n})}$, as required.
 \end{proof}

 \begin{remark}\label{rmk_chainflag}
 Notice that the chain $\mathcal C_n$ is a maximal imprimitivity
 chain for $T_n$, even though it is not the only one. It is known that
 every maximal imprimitivity chain for $T_n$ determines and is
 determined   by   a   maximal   flag
 $\Set{1}=T_{n,0} <\cdots< T_{n,n}=T_n$. Indeed, the partition
 $\mathcal B_i$ is the set of the orbits of $T_{n,n-i}$, and
 conversely $T_{n,n-i}$ is the pointwise stabilizer of the action of
 $T_n$ over $\mathcal B_i$. Any Sylow 2-subgroup $U$ of
 $\AGL(T_n) $ is the stabilizer by conjugation
 of a maximal flag of $T_n$, and
 therefore it stabilizes also the associated imprimitivity chain. In
 particular, the stabilizer of $\mathcal C_n$ in $\AGL(T_n)$ is
 $U_n=\Sigma_n \cap \AGL(T_n)$. More generally, any maximal flag
 $\mathcal F$ of $T_n$ determines a Sylow 2-subgroup $U_\mathcal{F}$
 of $\AGL(T_n)$ and a Sylow 2-subgroup $\Sigma_\mathcal{F}$~\cite[Theorem p. 226]{Leinen1988} of
 $\Sym(2^n)$      such      that
 $U_\mathcal{F} = \Sigma_{\mathcal F} \cap \AGL(T_n)$. The maps
 $\mathcal  F  \mapsto  U_\mathcal  F$  and
 $\mathcal F \mapsto \Sigma_\mathcal F$ are injective.
 Consequently, for every Sylow 2-subgroup $U$ of $\AGL(T_n)$ there
 exists a unique Sylow 2-subgroup $\Sigma$ of $\Sym(2^n)$ such that
 $U=\Sigma \cap \AGL(T_n)$. In particular, the intersection
 $\AGL(T_n)\cap \Sigma_n=N_{\Sigma_n}(T_n)=U_n$ is a Sylow
 $2$-subgroup of $\AGL(T_{n})$.
 \end{remark}

 \section{Experimental evidence on a normalizer chain}\label{sec_two}
 Let us start by defining the
 normalizer chain of $T_n$.
 \begin{definition}
 Using the same notation of the previous
 section, 
 the normalizer chain of $T_n$ is defined as the sequence
 $\{N_n^k\}_{k \geq 0}$, where
 \[N^{0}_{n}\deq U_n = N_{\Sigma_n}(T_n), \quad N^{1}_{n} \deq
  N_{\Sym(2^n)}(U_n),\] and recursively, for $k > 1$,
 \[N^{k}_{n}\deq N_{\Sym(2^n)}(N^{k-1}_{n}).\]
 \end{definition}

 Considering $\Sigma_n$ in place of $\Sym(2^n)$ 
 the resulting
 chain is the same, as proven in the next theorem.
 \begin{theorem}\label{th:Nk(Nk-1)}
 For every $k\ge 1$, we have $N^{k}_{n}=N_{\Sigma_n}(N^{k-1}_{n})$. In particular, $N^{k}_{n}$ is a $2$-group.
 \end{theorem}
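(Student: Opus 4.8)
The plan is to prove, by induction on $k$, the single statement $N^{k}_{n}\le \Sigma_n$; everything else then follows formally. Indeed, assuming $N^{k-1}_{n}\le \Sigma_n$, intersecting the defining normalizer with $\Sigma_n$ gives
\[
N_{\Sigma_n}(N^{k-1}_{n}) = N_{\Sym(2^n)}(N^{k-1}_{n})\cap \Sigma_n = N^{k}_{n}\cap \Sigma_n,
\]
so the desired equality $N^{k}_{n}=N_{\Sigma_n}(N^{k-1}_{n})$ is equivalent to the inclusion $N^{k}_{n}\le \Sigma_n$; and once $N^{k}_{n}\le \Sigma_n$, being contained in the $2$-group $\Sigma_n$ it is itself a $2$-group. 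The base case is immediate, since $N^{0}_{n}=U_n\le \Sigma_n$ by definition.

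For the inductive step I would exploit the description $\Sigma_n=\bigcap_{m=1}^n\Stab_{\Sym(2^n)}(\mathcal{B}^n_m)$: it suffices to show that every $g\in N^{k}_{n}=N_{\Sym(2^n)}(N^{k-1}_{n})$ fixes each partition $\mathcal{B}^n_m$. The key point is that these partitions are canonically attached to $N^{k-1}_{n}$. Since $U_n\le N^{k-1}_{n}$ and $T_n\le U_n$ acts regularly, $N^{k-1}_{n}$ is transitive, so every $N^{k-1}_{n}$-invariant partition is a block system with blocks of equal size; being in particular $U_n$-invariant, it refines to the orbit partition of a $U_n$-stable subgroup of $T_n$, i.e.\ of a $U_n$-submodule. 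Here I use the uniseriality of $T_n$ as a $U_n$-module (equivalently, that $U_n$ stabilizes the unique maximal flag $\mathcal{F}_n$, Remark~\ref{rmk_chainflag}): there is exactly one $U_n$-submodule of each dimension, namely $T_{n,n-m}$, whose orbits form $\mathcal{B}^n_m$. Hence $\mathcal{B}^n_m$ is the \emph{unique} $N^{k-1}_{n}$-invariant block system with $2^m$ blocks.

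Now let $g\in N^{k}_{n}$. Conjugation by $g$ sends $N^{k-1}_{n}$-invariant partitions to $(N^{k-1}_{n})^{g}=N^{k-1}_{n}$-invariant partitions and preserves the number of blocks. Applying this to $\mathcal{B}^n_m$, which is $N^{k-1}_{n}$-invariant by the inductive hypothesis $N^{k-1}_{n}\le \Sigma_n$, yields an $N^{k-1}_{n}$-invariant block system $(\mathcal{B}^n_m)^{g}$ with $2^m$ blocks, which by the uniqueness just established must equal $\mathcal{B}^n_m$. Thus $g$ stabilizes $\mathcal{B}^n_m$ for every $m$, so $g\in \Sigma_n$, giving $N^{k}_{n}\le \Sigma_n$ and completing the induction.

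The main obstacle is the uniqueness statement in the second paragraph: the whole argument hinges on $\mathcal{B}^n_m$ being the only $N^{k-1}_{n}$-invariant block system of its size, and this reduces precisely to the uniseriality of $T_n$ over $U_n$. I would take care to justify the correspondence between $U_n$-invariant block systems and $U_n$-submodules of $T_n$ (an invariant partition is the orbit partition of a subgroup of $T_n$, and $U_n$-invariance forces that subgroup to be $U_n$-stable), since the remainder of the proof is purely formal once this correspondence and the uniserial structure are in place.
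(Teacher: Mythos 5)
Your proof is correct and follows essentially the same route as the paper's: both reduce the claim to showing $N^{k}_{n}\le\Sigma_n$ and derive this from the fact that conjugation by a normalizing element permutes the $N^{k-1}_{n}$-invariant block systems, which are pinned down uniquely by the uniseriality of $T_n$ as a $U_n$-module (the paper phrases this as uniqueness of the maximal imprimitivity chain for $U_n$ via Remark~\ref{rmk_chainflag}). Your version merely makes explicit the induction on $k$ and the correspondence between $U_n$-invariant block systems and $U_n$-submodules of $T_n$, which the paper leaves implicit.
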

 \begin{proof}
 Suppose that $\mathcal{B}$ is a system of imprimitivity for
 $N^{k-1}_{n}$. For each $x\in N^{k}_{n}$, the partition
 $\mathcal{B}^x$ is a system of imprimitivity for $(N^{k-1}_{n})^x$
 and so for $N^{k-1}_{n}$, since $(N^{k-1}_{n})^x=N^{k-1}_{n}$.
 Thus, for a given $x\in N^{k}_{n}$ and an imprimitivity chain
 $\mathcal{C}$ for $N^{k-1}_{n}$, the set $\mathcal{C}^x$ is also an
 imprimitivity chain for $N^{k-1}_{n}$ and a fortiori for $U_n$.
 Since, by Remark~\ref{rmk_chainflag}, the imprimitivity chain
 $\mathcal{C}_n$ is the unique maximal one for $U_n=N^{0}_{n}$, we
 have $\mathcal{C}_n^x=\mathcal{C}_n$. Hence $\mathcal{C}_n$ is
 stabilized by $N^{k}_{n}$ for every $k$, yielding
 $N^{k}_{n}\le \Sigma_n$.
 \end{proof}
A direct consequence of the previous theorem is
 that there exists $d(n)\in  \mathbb N$ such that
 \[N^{k}_{n}=N^{d(n)}_{n} = \Sigma_n\]
  for every $k \geq d(n)$. We can
 interpret $d(n)+1$ as an upper bound for the \emph{defect}
 $\delta(n)$ of $T_n$ in $\Sigma_n$, i.e.\ the length of the shortest
 subnormal series from $T_n$ to $\Sigma_n$. Recalling that $\Sigma_n$ is self-normalizing in $\Sym(2^n)$~(see \cite{Carter1964}), as already pointed out in the introduction, 
 the fact previously stated represents a further
 argument showing that every Sylow $2$-subgroup of $\AGL(T_n)$ is
 contained in exactly one Sylow 2-subgroup of $\Sym(2^n)$.\\
 
We already recalled in Remark~\ref{rmk_chainflag} that $N^{0}_{n}=U_n$
normalizes a maximal flag $\mathcal{F}$ of $T_n$. Below we study the
action by conjugation of $N^{1}_{n}$ over $\mathcal{F}$.
\begin{proposition}
 The group $N^{1}_{n}$ normalizes each term of the flag
 $\{T_{n,0},\ldots, T_{n,n-2}\}$.
\end{proposition}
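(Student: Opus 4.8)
The plan is to reduce the whole statement to a \emph{single} invariance claim and then isolate where the genuine work sits. By Theorem~\ref{th:Nk(Nk-1)} we have $N^1_n\le\Sigma_n$, so $N^1_n$ fixes the imprimitivity chain $\mathcal C_n$ and hence normalizes, for every $m$, the pointwise stabilizer $P_m$ of the partition $\mathcal B^n_m$ in $\Sigma_n$ (elements fixing each block setwise); it also normalizes $U_n=N^0_n$ by definition. By Remark~\ref{rmk_chainflag} each flag term is recovered as $T_{n,i}=T_n\cap P_{n-i}$. For $i\le n-2$ we have $n-i\ge 2$, so $P_{n-i}\le P_2$ and therefore $T_{n,i}=T_n\cap P_{n-i}=\bigl(T_n\cap P_2\bigr)\cap P_{n-i}=T_{n,n-2}\cap P_{n-i}$. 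Since $N^1_n$ normalizes $P_{n-i}$, it will normalize $T_{n,i}$ as soon as it normalizes $T_{n,n-2}=T_n\cap P_2$ (the case $i=0$ being trivial). Thus the entire statement collapses to the single assertion that $N^1_n$ normalizes $T_{n,n-2}$.

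To attack this I would pass to the linear model $U_n\cong UT(n+1,\FF)$, the group of upper unitriangular $(n+1)\times(n+1)$ matrices over $\FF$, realized as the Sylow $2$-subgroup of $\AGL(T_n)=\FF^n\rtimes\GL(n,\FF)$. Under this identification $T_n$ is the ``last column'' (the translations), $U_n$ stabilizes the standard flag, and the lower central series of $UT(n+1,\FF)$ gives $T_{n,i}=T_n\cap\gamma_{n+1-i}(U_n)$; in particular $T_{n,n-2}\subseteq\gamma_3(U_n)$. Fixing $x\in N^1_n$ and letting $\phi$ be conjugation by $x$, the map $\phi$ is an automorphism of $U_n$ that preserves every $\gamma_k(U_n)$ (these are characteristic) and every ``row'' subgroup $R_s=U_n\cap P_{n-s}$ (invariant because $x$ normalizes both $U_n$ and $P_{n-s}$). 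I would then study the action of $\phi$ on $\gamma_3(U_n)$ together with its induced row filtration, the goal being $\phi(T_{n,n-2})=T_{n,n-2}$.

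The main obstacle is exactly here. Preserving the $\gamma_\bullet$ and $R_\bullet$ filtrations pins $\phi(T_{n,n-2})$ down only \emph{modulo corrections lying in lower rows and in non-translation (linear) directions} inside $\gamma_3(U_n)$; indeed at the levels $\gamma_2$ and $\gamma_1$, which correspond to the excluded terms $T_{n,n-1}$ and $T_n$, such mixing of translations with transvections genuinely occurs, and this is precisely why the statement stops at $n-2$. The crux is therefore to prove that these linear corrections vanish from $\gamma_3$ onward, and for this one must use that $x$ lies in $\Sigma_n$ and not merely that it normalizes $U_n$: the induced action of $N^1_n$ on each chief factor $P_m/P_{m+1}\cong\FF^{\,\mathcal B^n_m}$ is by permutation of the blocks, whereas the generators $t^n_1,\dots,t^n_{n-2}$ spanning $T_{n,n-2}$ are characterized among elements of $T_n$ by acting \emph{identically on the two halves of every block} (they all lie in the diagonal $\Delta_n(T_{n-1})=T_{n,n-1}$), a symmetry that the spurious transvection-type corrections—which act trivially on one half and nontrivially on the other—do not possess.

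Carrying this final step out rigorously is, I expect, the cumbersome computational core. The natural way to organize it is an induction on $n$ through the wreath decomposition $\Sigma_n=\Sigma_{n-1}\wr C_2$ and the recursion $T_{n,i}=\Delta_n(T_{n-1,i})$ for $i\le n-1$: one writes $x\in N^1_n\le\Sigma_n$ in its components relative to the two halves, relates $N^1_n$ to the lower normalizers acting on $\Sigma_{n-1}$, and checks that conjugation cannot introduce a half-asymmetric (linear) term into $(t^n_i)^x$ for $i\le n-2$ while respecting the block-permutation action on the chief factors. I anticipate that tracking the correction terms through this decomposition—rather than any conceptual difficulty—will be the part demanding the most care, and that the threshold $i\le n-2$ will emerge automatically as the point below which the diagonal symmetry is forced.
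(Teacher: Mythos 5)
Your opening reduction is correct and is in fact a cleaner route for the small terms of the flag than the one in the paper: since $N^1_n\le\Sigma_n$ stabilizes $\mathcal C_n$, it normalizes each kernel $P_m$ of the action on $\mathcal B^n_m$, and $T_{n,i}=T_n\cap P_{n-i}=T_{n,n-2}\cap P_{n-i}$ for $i\le n-2$, so everything does collapse to the single claim that $N^1_n$ normalizes $T_{n,n-2}$. (The paper instead handles $i<n-2$ by noting that $T_{n,i}^g$ is a $U_n$-invariant subgroup of $T_n$ of order $2^i$ and invoking uniseriality of $T_n$ as a $U_n$-module; your intersection-with-$P_{n-i}$ argument buys the same conclusion without appealing to uniseriality.)

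The genuine gap is that you never prove the one claim you reduced everything to. Your treatment of the normality of $T_{n,n-2}$ in $N^1_n$ is a programme, not a proof: you pass to the unitriangular model, observe that preserving the $\gamma_\bullet$ and row filtrations only pins down $\phi(T_{n,n-2})$ modulo ``transvection-type corrections,'' correctly identify that killing those corrections is ``the crux,'' and then explicitly defer the verification (``Carrying this final step out rigorously is, I expect, the cumbersome computational core''). Nothing in the proposal rules out such a correction term, and the heuristic about half-symmetry is not turned into an argument. The paper does not need any of this machinery: it quotes Corollary~3 of \cite{Aragona2019}, which states that for $g\in N^1_n\setminus U_n$ one has $T_n\cap T_n^g=T_{n,n-2}$ and that this intersection is normal in $N^1_n$ --- exactly the missing fact. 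Either cite that result (or reprove it, e.g.\ via $\Size{T_n\cdot T_n^g}=2^{n+2}$ and the uniserial structure), or actually carry out the wreath-product induction you sketch; as written, the central step of the proposition is asserted rather than established.
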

\begin{proof}
 It is enough to prove that each element of $N^{1}_{n}\setminus U_n$
 normalizes $T_{n,i}$ for $0\leq i \leq n-2$. For each
 $g\in N^{1}_{n}\setminus U_n$, from \cite[Corollary 3]{Aragona2019}
 we have that $T_{n,n-2}=T_n\cap T_n^g$ is normal in $N^{1}_{n}$.
 Hence, for every subgroup $H= T_{n,i}$ where $i < n-2$ and for every
 $g\in N^{1}_{n}\setminus U_n$, we have $H^g\le T_{n,n-2}$. If
 $x\in U_n$, we clearly have $(H^g)^x=(H^{gxg^{-1}})^g=H^g$. Since
 $T_n$ is a uniserial $U$-module, we conclude that $H^g$ belongs to
 $\mathcal{F}_n$. Thus $T_{n,i}^g=H^g=T_{n,i}$.
\end{proof}

We also used  \textsf{GAP}  to calculate
$N^{k}_{n}$ for $n\leq 11$.
The computational results are summarized in Fig.~\ref{table1}, where
the entry in position $(k, n)$ denotes the logarithm in base $2$ of
the size of $N^{k-1}_{n}$. We
observe that, in each column, consecutive values above the diagonal (bold values in the figure) have fixed differences. Such differences are listed in the (auxiliary) last column. 
For example, the number ``+7'' appearing in the last
position of the fifth row denotes that the difference between
$\log_{2} |N^{4}_{j}|$ and $\log_{2} |N^{3}_{j}|$ equals 7, where
$5 \leq j \leq 11$, reading the table from left to right, starting
from the position $(5,5)$ containing the bold number, i.e.\ the number
35.
 
\begin{figure}[htbp]
 \begin{center}
 \begin{tabular}{c||c|c|c|c|c|c|c|c|c|c||c}
  n & 2&3&4&5&6&7&8&9&10&11&\\
  \hline\hline
  $\log_{2} |U_n|$ & {\bf 3}&6&10&15&21&28&36&45&55&66 &\\
  \hline
  $\log_{2} |N^{1}_{n}|$ & -& {\bf 7} & 11 & 16 &22 & 29 & 37 & 46&56&67&+1\\
  \hline
  $\log_{2} |N^{2}_{n}|$ & -& -& {\bf 13} & 18 &24& 31 & 39 & 48&58&69&+2\\
  \hline
  $\log_{2} |N^{3}_{n}|$ & -& - & 14 & {\bf 22} &28&35 &43 & 52&62&73&+4\\
  \hline
  $\log_{2} |N^{4}_{n}|$ & -& - & 15 & 23 &{\bf 35}&42 &50 & 59&69&80&+7\\
  \hline
  $\log_{2} |N^{5}_{n}|$ & -& - & - & 25 &37&{\bf 53} & 61 & 70&80&91&+11\\
  \hline
  $\log_{2} |N^{6}_{n}|$ & -& - & - & 27 &41&57 & {\bf 77} & 86&96&107 &+16\\
  \hline
  $\log_{2} |N^{7}_{,n}|$ & -& - & - & 28 &45& 64 & 84 &{\bf 109}&119&130&+23\\
  \hline
  $\log_{2} |N^{8}_{n}|$ & -& - & - & 29 &46& 67 & 89 &113&{\bf 151}&162&+32\\
  \hline
  $\log_{2} |N^{9}_{n}|$ & -& - & - & 30 &47& 71 & 95 &122&155&{\bf 205}&+43\\
 \end{tabular}
 \end{center}
 \caption{The logarithm of the size of the normalizers, when
 $n \leq 11$}
 \label{table1}
\end{figure}

The table suggests that the values 
$\log_2\Size{N^{k}_{n}:N^{k-1}_{n}}$, reported in the last column of Fig.~\ref{table1}, do not
depend on $n$, if $n\geq k+2$, and match with
those of  the
sequence $\Set{a_j}_{j \geq 1}$ of the partial sums of the sequence $\Set{b_j}_{j \geq 1}$
counting the number of partitions of $j$ into at least two
distinct parts. The reader is referred to \emph{The On-Line Encyclopedia of
 Integer  Sequences} at~\cite[\url{https://oeis.org/A317910}]{OEIS} for a list of values and additional information.
In the next section we show that for small values of $k$ this is actually true. The above evidence is now summarized here as a conjecture.

\begin{conjecture}\label{conj:main}
 For $n\ge k+2\ge 3$, the number $\log_{2}\Size{N^{k}_n : N^{k-1}_n}$
 is independent of $n$ and is equal to $(k+2)$-th term of the
 sequence $\Set{a_j}_{j \geq 1}$ of the partial sums of the sequence $\Set{b_j}_{j \geq 1}$
 counting the number of partitions of $j$ into at least two
 distinct parts.
\end{conjecture}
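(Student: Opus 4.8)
The plan is to split Conjecture~\ref{conj:main} into two parts and attack them in turn: the \emph{stabilization} claim that $c_k\deq\log_2\Size{N^k_n:N^{k-1}_n}$ becomes independent of $n$ once $n\ge k+2$, and the \emph{evaluation} claim that this stable value equals $a_{k+2}$. All computations may be carried out inside $\Sigma_n$ by Theorem~\ref{th:Nk(Nk-1)}, and the structural tool throughout is the wreath recursion $\Sigma_n\cong\Sigma_{n-1}\wr C_2$ together with the diagonal embedding $\Delta_n$, which ties the chain at level $n$ to the chain at level $n-1$. I would proceed by induction on $k$, maintaining at each stage an explicit generating description of $N^{k-1}_n$ and computing $N^k_n=N_{\Sigma_n}(N^{k-1}_n)$ by deciding precisely which $g\in\Sigma_n$ normalise it.

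For stabilization, the key input is the preceding Proposition: every element of $N^1_n$ fixes each of $T_{n,0},\dots,T_{n,n-2}$, so the growth of the chain is confined to the topmost part of the flag $\mathcal F_n$, while the lower part of each configuration reproduces, through $\Delta_n$, the situation one level down. Making this quantitative, I would establish a level-preserving comparison showing that for $n\ge k+2$ the elements freshly adjoined at step $k$ for the parameter $n$ are in bijection with those adjoined for the parameter $n-1$, so that $c_k$ is unchanged. I expect this half to be reachable: it compares two wreath products differing only in inert top levels, and for small $k$ it is exactly the finite verification underlying Theorem~\ref{thm:main}.

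For the evaluation I would pass from groups to graded linear algebra. Since each $N^k_n$ is a $2$-group, $c_k$ equals $\log_2$ of the order of the section $N^k_n/N^{k-1}_n$, and I would read it off from the associated graded restricted Lie algebra of $\Sigma_n$ for its Jennings filtration, whose total $\FF$-dimension recovers $\log_2\Size{\Sigma_n}$ and in which $\Sigma_n$ carries a natural grading by tree depth. The target is a homogeneous $\FF$-basis of the contribution of this section, indexed so that a basis vector corresponds to a choice of distinct levels $i_1>\dots>i_r$ with $r\ge2$; the degree of such a vector is its size $i_1+\dots+i_r$, and the admissible sizes at step $k$ should be exactly those $\le k+2$. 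Since the number of partitions of $j$ into at least two distinct parts is $b_j$, summing over $3\le j\le k+2$ (the terms $b_1,b_2$ vanishing) yields $\sum_{j\le k+2}b_j=a_{k+2}$, giving $c_k=a_{k+2}$.

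The main obstacle is precisely this last correspondence in full generality. For small $k$ one writes down the few relevant generators by hand and reads off the dimension, which is why Theorem~\ref{thm:main} is an ``elementary but increasingly cumbersome'' computation; but as $k$ grows the number of interacting levels grows, and one must control \emph{uniformly in $k$} how a candidate normalising element commutes with all the lower-level generators of $N^{k-1}_n$ without creating spurious relations that would spoil the count. The crux is therefore to isolate a weight grading on the successive quotients that is manifestly preserved by the normaliser operation and whose graded pieces are rigidly indexed by distinct-part partitions; producing such an invariant, rather than checking it case by case, is the step that keeps the general conjecture open.
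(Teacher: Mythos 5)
You are being asked to prove a statement that the paper itself presents as a conjecture: the paper establishes it only for $1\le k\le 4$ (Theorem~\ref{thm:main}) and explicitly leaves the general case open, and your proposal likewise does not close it --- you concede in your final sentence that the crux ``keeps the general conjecture open.'' So what you have written is a strategy sketch, not a proof, and the gap is exactly where you locate it. Concretely, neither half of the plan is carried out. For the stabilization half, the ``level-preserving comparison'' giving a bijection between the elements freshly adjoined at step $k$ for parameter $n$ and for parameter $n-1$ is asserted but never constructed. The paper's actual mechanism is the recursion of Proposition~\ref{prop:normfact} and Proposition~\ref{cor:Nkn}, which writes $N^{k}_{n}=\Span{s_n}\ltimes\bigl(\Delta_n(D^{k}_{n-1})\ltimes(C^{k}_{n-1}\times\Set{1})\bigr)$ in terms of the auxiliary pairs $(C^{k}_{n-1},D^{k}_{n-1})$ of Definition~\ref{def:CeD}; the hard step --- identifying $D^{k}_{n}$ with an earlier normalizer $N^{j}_{n}$ and $C^{k}_{n}$ with an explicit subgroup built from the upper central series $Z_h(\Theta_n)$ of the unitriangular group --- is done by increasingly laborious hand computation for each $k\le 4$, and it is precisely this identification that is not known uniformly in $k$.

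For the evaluation half, the proposed passage to the Jennings filtration and an associated graded restricted Lie algebra, with a homogeneous basis indexed by partitions into at least two distinct parts, is an appealing heuristic but is nowhere substantiated: you do not define the grading on the sections $N^{k}_{n}/N^{k-1}_{n}$, give no reason why the normalizer operation should preserve it, and exhibit no bijection between basis vectors and distinct-part partitions. The paper offers nothing of this kind either; for $k\le 4$ it simply computes the indices $2^1,2^2,2^4,2^7$ from the orders of the $Z_h(\Theta_{n-1})$ and of $N^{j}_{n-1}$, and then observes a posteriori that these exponents are $a_3,\dots,a_6$. If you could make the weight-grading invariant precise and prove it is preserved by taking normalizers, that would be a genuine advance beyond what the paper achieves; as written, it is the missing idea rather than a proof, and the conjecture remains open after your argument exactly as it was before it.
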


%Based on this computational evidence we 
%conjecture that
%%
%%\begin{conjecture}\label{conj:main}
% for $n\ge k+2\ge 3$, the number $\log_{2}\Size{N^{k}_n : N^{k-1}_n}$
% is independent of $n$ and is equal to $(k+2)$-th term of the
% sequence $\Set{a_j}_{j \geq 1}$ of the partial sums of the sequence $\Set{b_j}_{j \geq 1}$
% counting the number of partitions of $j$ into at least two
% distinct parts.
%%\end{conjecture}

\noindent The first values of the sequences $a_j$ and $b_j$ are listed in Fig.~\ref{tab:one}.
\\
\begin{figure}
 \centering
 \begin{tabular}{c||c|c|c|c|c|c|c|c|c|c|c|c|c|c}
 $j$  & $1$ & $2$ & $3$ & $4$ & $5$ & $6$ & $7$ & $8$ & 9& 10 &11&12&13&14\\
 \hline\hline
 ${b_j}$& 0& 0& 1& 1& 2& 3& 4& 5& 7& 9& 11& 14& 17& 21\\
 \hline
 $a_j$ &0& 0& 1& 2& 4& 7& 11& 16& 23& 32& 43& 57& 74&95
 \end{tabular}
 \caption{First values of the sequences $a_j$ and $b_j$}
 \label{tab:one}
\end{figure}

%Although we provide theoretical evidences supporting our conjecture in the next section,
%where we show that the previous statement is true for $\le k\leq 4$, by way of an
%explicit construction of $N^{k}_{n}$,
%at the time of writing we lack an adequate understanding of its combinatorial
%nature.
%\begin{openprob}
% Assuming Conjecture~\ref{conj:main} is true, investigate its
% combinatorial nature.
%\end{openprob}

\section{Theoretical evidence}\label{sec:evi}
In this section we prove Theorem~\ref{thm:main} which 
solves
Conjecture~\ref{conj:main} in the cases $1\leq k\leq 4$, by providing
an explicit construction of $N^{k}_{n}$. We first need the following
general lemma.
\begin{lemma}\label{lem:two}
 Let $G=A\rtimes B$ be a group and $H$ a subgroup of $G$ containing
 $B$. If $[N_{A}(H\cap A), B]\le H$, then
 \[ N_G(H)=N_{A}(H\cap A)\rtimes B.
 \]
\end{lemma}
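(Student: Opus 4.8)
The plan is to prove the set-theoretic equality $N_G(H) = N_A(H \cap A) \rtimes B$ by two inclusions. The reverse inclusion $\supseteq$ is the easy direction and I would dispatch it first. The right-hand side is a subset of $G$ precisely because $B$ normalizes $A$ (so $N_A(H \cap A) \cdot B$ is a genuine subgroup, using that $N_A(H\cap A)$ is $B$-invariant — which follows since $B$ normalizes both $A$ and $H$, hence $H \cap A$), and I claim every such element normalizes $H$. An element of $N_A(H\cap A)$ need not a priori normalize $H$, but the hypothesis $[N_A(H\cap A), B] \le H$ is exactly the leverage needed: combined with the fact that $B \le H$, it lets me show any $a \in N_A(H \cap A)$ conjugates $H$ into $H$. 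Concretely, for $h \in H$ write $h = a_0 b_0$ with $a_0 \in A$, $b_0 \in B$ (using $B \le H$ so that $a_0 = h b_0^{-1} \in H \cap A$), and track where $a^{-1} h a$ lands, using the commutator hypothesis to absorb the error terms into $H$.

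The forward inclusion $\subseteq$ is the substantive direction. Take $g \in N_G(H)$ and write it uniquely as $g = a b$ with $a \in A$, $b \in B$. Since $b \in B \le H \le N_G(H)$, I get $a = g b^{-1} \in N_G(H)$, so it suffices to show $a \in N_A(H \cap A)$. The key structural observation is that $H \cap A$ is a normal subgroup of $H$ that is the unique largest normal subgroup of $H$ contained in $A$; more usefully, $H \cap A = H \cap A$ is visibly characteristic-like under conjugation by any element normalizing both $H$ and $A$. Since $a \in A$ normalizes $A$ trivially and normalizes $H$ by assumption, it normalizes $H \cap A$, giving $a \in N_A(H \cap A)$ directly. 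This makes the forward inclusion essentially immediate once one records that conjugation by $a$ fixes $A$ setwise.

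The hard part, and the step I would be most careful about, is the reverse inclusion — specifically verifying that a general $a \in N_A(H \cap A)$ actually normalizes $H$, since $N_A(H\cap A)$ could in principle be much larger than $H \cap A$. Here the commutator condition $[N_A(H\cap A), B] \le H$ does the real work. I would argue as follows: to show $a$ normalizes $H$, it suffices to show $H^a \subseteq H$ (equality then follows by a symmetric argument applied to $a^{-1}$, which also lies in $N_A(H\cap A)$). For $h = a_0 b_0 \in H$ with $a_0 \in H\cap A$ and $b_0 \in B$, compute
\[
h^a = a_0^a\, b_0^a = a_0^a\, b_0\, [b_0, a]\, ,
\]
where $a_0^a \in H \cap A \le H$ since $a$ normalizes $H \cap A$, where $b_0 \in H$, and where $[b_0, a] \in [B, N_A(H\cap A)] = [N_A(H\cap A), B] \le H$ by hypothesis. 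Hence $h^a$ is a product of three elements of $H$, so $h^a \in H$, as required.

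Finally I would assemble the two inclusions to conclude $N_G(H) = N_A(H\cap A) \cdot B$, and note that this product is a semidirect product: the intersection $N_A(H\cap A) \cap B$ is contained in $A \cap B = \{1\}$, and $N_A(H\cap A)$ is normal in $N_A(H\cap A) \rtimes B$ because $B$ normalizes $A$ and hence normalizes the $B$-invariant subgroup $N_A(H\cap A)$. This yields the claimed decomposition $N_G(H) = N_A(H\cap A) \rtimes B$.
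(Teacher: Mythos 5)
Your proof is correct and follows essentially the same route as the paper's: both directions are handled identically (the commutator hypothesis absorbs the $B$-part of $h=a_0b_0$ when conjugating by $a\in N_A(H\cap A)$; the $B$-invariance of $N_A(H\cap A)$ comes from $B$ normalizing $A$ and $H\cap A$; and the forward inclusion reduces to the $A$-component normalizing both $H$ and $A$, hence $H\cap A$). The only difference is cosmetic: you compute $h^a$ directly where the paper tracks $[h,x]$ via the commutator identity $[bk,x]=[b,x]^k[k,x]$.
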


\begin{proof}
 Clearly  $B\le  H  \le  N_G(H)$.   Let
 $x\in N_{A}(H\cap A)\le A\trianglelefteq G$. Then $[H,x]\subseteq A$,
 since $A$ is normal in $G$. Let $h\in H$ and let us write $h=bk$ where $b\in B$
 and $k\in A\cap H$. We have $[h,x]=[bk,x]=[b,x]^k[k,x]\in H$, since
 $[b,x]\in H$ as $[N_{A}(H\cap A), B]\le H$ by hypotheses. Thus
 $[H,x]\subseteq H\cap A$ and thus $N_{A}(H\cap A)\le N_G(H)$.

 \noindent Let $x\in N_A(H\cap A)$, $k\in H\cap A$ and $b\in B$.
 Notice that
 \[k^{x^b}=((\underbrace{k^{b^{-1}}}_{\in H\cap A})^x)^b\in H\cap
 A.\] This implies that $N_{A}(H\cap A)$ is normalized by $B$. As
 a consequence, we have \[N_G(H)\ge N_{A}(H\cap A)\rtimes B.\]

 Conversely, let $x\in N_G(H)$. Since $G=A\rtimes B$, we can find
 $b\in B\le H\le N_G(H)$ such that $x=bu$ with $u\in A$. Clearly,
 $u\in N_G(H)\cap A=N_A(H)$. If $h\in H\cap A$, then
 $[u,h]\in H\cap A$, since $A$ is normal in $G$. Thus
 $x=bu\in N_{A}(H\cap A)\rtimes B$.
\end{proof}
 
Consider now the set-wise stabilizer $Q_{n}$ in $\Sigma_n$ of
$X_{n-1}$.   This  group  acts  also  on
$X_{n-1}^{s_n}=\{2^{n-1}+1,\ldots,2^n\}$    and   so
$Q_{n}=\Sigma_{n-1}\times   (\Sigma_{n-1})^{s_n}$   and
$\Sigma_n=Q_{n}\rtimes \Span{s_{n}}$, where $s_n$ interchanges the two
direct factors of $Q_n$. We can give a general procedure for the
construction of the normalizer $N_{\Sigma_n}(Y)$ of a subgroup
$Y\le  \Sigma_n$  containing  $T_n$  such  that
$[N_{Q_n}(Y\cap Q_n), s_n]\subseteq Y$. Since $t_n=s_n\in Y$, we have
$Y^{s_n}=Y$              and
$N_{\Sigma_n}(Y)=N_{Q_{n}}(Y\cap Q_{n})\rtimes\Span{s_n}$ by
Lemma~\ref{lem:two}.\\
 
Let us apply the previous construction to obtain a description of
$U_n$ as the normalizer of $T_n$ in $\Sigma_n$.
 
\begin{proposition}\label{prop_Un} 
 We have that
 \begin{align*}
 U_n & = \left< s_n\right> \ltimes \bigl( \Delta_n(U_{n-1}) \cdot ( T_{n-1}\times T_{n-1}^{s_n})\bigr)\\
  & = \left< s_n\right> \ltimes \bigl( \Delta_n(U_{n-1}) \cdot T_{n-1}\bigr).
 \end{align*}
\end{proposition}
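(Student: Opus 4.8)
The plan is to reduce the computation of $U_n=N_{\Sigma_n}(T_n)$ to the normalizer of a diagonal subgroup inside the base factor $Q_n=\Sigma_{n-1}\times(\Sigma_{n-1})^{s_n}$, and then to reinstate the top generator $s_n$ by means of Lemma~\ref{lem:two}. First I would record two structural facts that follow directly from the recursive definition of the generators $t^n_i$: for $1\le i<n$ one has $t^n_i=\Delta_n(t^{n-1}_i)$, while $t^n_n=s_n$. Consequently $T_n=\Delta_n(T_{n-1})\rtimes\Span{s_n}$, and since $s_n$ interchanges the two halves whereas $\Delta_n(T_{n-1})\le Q_n$, the intersection with the base group is $T_n\cap Q_n=\Delta_n(T_{n-1})$.

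The heart of the argument is the explicit computation of $N_{Q_n}(\Delta_n(T_{n-1}))$. Writing a general element of $Q_n$ as $(a,b^{s_n})$ and conjugating a typical $(x,x^{s_n})\in\Delta_n(T_{n-1})$ componentwise gives $(x^a,(x^b)^{s_n})$; requiring this to lie in $\Delta_n(T_{n-1})$ for every $x\in T_{n-1}$ forces $x^a=x^b\in T_{n-1}$. Using that $U_{n-1}=N_{\Sigma_{n-1}}(T_{n-1})$ and that $T_{n-1}$ is self-centralizing in $\Sigma_{n-1}$ (Remark~\ref{lem:one}), this translates into the two conditions $a\in U_{n-1}$ and $ab^{-1}\in C_{\Sigma_{n-1}}(T_{n-1})=T_{n-1}$. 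I would then match this set with $\Delta_n(U_{n-1})\cdot(T_{n-1}\times T_{n-1}^{s_n})$: the inclusion $\subseteq$ uses $b^{-1}a=t^b\in T_{n-1}$ (with $t=ab^{-1}$, a conjugate of $t$ since $T_{n-1}\isnorm U_{n-1}$), whence $(a,b^{s_n})=\Delta_n(b)\cdot(b^{-1}a,1)$, and the reverse inclusion is a direct check.

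To deduce the proposition I would invoke the general construction preceding the statement, i.e.\ apply Lemma~\ref{lem:two} to $G=\Sigma_n=Q_n\rtimes\Span{s_n}$ with $Y=T_n$. The only hypothesis needing verification is $[N_{Q_n}(T_n\cap Q_n),s_n]\subseteq T_n$, and this is the step I expect to be the main (indeed essentially the only) subtle point. Computing $[(a,b^{s_n}),s_n]=(a^{-1}b,(b^{-1}a)^{s_n})$, the elementary abelian nature of $T_{n-1}$ becomes indispensable: with $t=ab^{-1}\in T_{n-1}$ one finds $a^{-1}b=(t^{-1})^{b}$ and $b^{-1}a=t^{b}$, and these coincide precisely because $t^2=1$, so that $[(a,b^{s_n}),s_n]=\Delta_n(t^b)\in\Delta_n(T_{n-1})\subseteq T_n$. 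With the hypothesis in place, Lemma~\ref{lem:two} yields $U_n=N_{Q_n}(\Delta_n(T_{n-1}))\rtimes\Span{s_n}=\Span{s_n}\ltimes\bigl(\Delta_n(U_{n-1})\cdot(T_{n-1}\times T_{n-1}^{s_n})\bigr)$.

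Finally, for the second displayed equality I would show that the factor $T_{n-1}^{s_n}$ is redundant. Since $U_{n-1}$ normalizes $T_{n-1}$, both $\Delta_n(U_{n-1})\cdot(T_{n-1}\times T_{n-1}^{s_n})$ and $\Delta_n(U_{n-1})\cdot T_{n-1}$ (with $T_{n-1}$ identified with $T_{n-1}\times 1$) are subgroups of $Q_n$. The identity $(1,t^{s_n})=\Delta_n(t)\cdot(t^{-1},1)$ shows $1\times T_{n-1}^{s_n}\subseteq\Delta_n(U_{n-1})\cdot T_{n-1}$, while the opposite inclusion is immediate, so the two products coincide and the proposition follows.
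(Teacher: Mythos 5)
Your proposal is correct and follows essentially the same route as the paper: identify $T_n\cap Q_n=\Delta_n(T_{n-1})$, compute $N_{Q_n}(\Delta_n(T_{n-1}))=\Delta_n(U_{n-1})\cdot T_{n-1}$ using that $T_{n-1}$ is self-centralizing in $\Sigma_{n-1}$, and then apply Lemma~\ref{lem:two} after checking the commutator condition with $s_n$. Your verification of $[N_{Q_n}(T_n\cap Q_n),s_n]\subseteq T_n$ via $t^2=1$ is, if anything, slightly more explicit than the paper's, which phrases the same check as $[T_{n-1}\times T_{n-1}^{s_n},s_n]\leq \Delta_n(T_{n-1})$.
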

\begin{proof}
 Using the same notation as above, we notice that
 \[
 T_n\cap Q_{n}=\Delta_n(T_{n-1})=\{ (t,t^{s_{n}}) \mid t\in
 T_{n-1}\}.\] We first claim that
 \[
 N_{Q_{n}}(T_{n}\cap Q_{n})= \Delta_n(U_{n-1}) \cdot T_{n-1}.
 \]
 It is straightforward to check that $\Delta_{n}(U_{n-1})$ normalizes
 $T_{n}\cap Q_{n}=\Delta_n(T_{n-1})$ and that $T_{n-1}$ centralizes
 $T_{n}\cap Q_{n}$, hence
 \[
 N_{Q_{n}}(T_{n}\cap Q_{n})\ge \Delta_n(U_{n-1}) \cdot T_{n-1}.
 \]
 Now, let $x=(a,b^{s_{n}})\in N_{Q_{n}}(T_{n}\cap Q_{n})$, where
 $a,b\in      \Sigma_{n-1}$,      and
 $y=(t,t^{s_n})\in \Delta_n(T_{n-1} ) =T_{n}\cap Q_{n} <
 T_{n-1}\times T_{n-1}^{s_n}$. We have that
 \[
 y^x= (t^a, (t^{b})^{s_n}) = (\bar{t},\bar{t}^{s_{n}})\in
 \Delta_n(T_{n-1} )<T_{n-1}\times T_{n-1}^{s_n}
 \]
 for some $\bar{t}\in T_{n-1}$,
 and so $t^a=\bar{t}=t^b$. It follows that $a,b\in U_{n-1}$ and
 $ab^{-1}\in   C_{\Sigma_{n-1}}(T_{n-1})=T_{n-1}$   by
 Remark~\ref{lem:one}.  Therefore $a=b  \tilde{t}$, with
 $\tilde{t}\in      T_{n-1}$      and
 $x=(b ,b^{s_{n}})\cdot(\tilde{t},1)\in \Delta_n(U_{n-1}) \cdot
 T_{n-1}$, giving the opposite inclusion. In conclusion,
 \[
 N_{Q_{n}}(T_{n}\cap Q_{n})= \Delta_n(U_{n-1}) \cdot T_{n-1}.
 \]
 In order to apply Lemma~\ref{lem:two}, it remains to be shown that
 $[\Delta_n(T_{n-1})\cdot T_{n-1}, s_n] \leq T_{n}$. Notice that
 $\Delta_n(T_{n-1})\cdot T_{n-1} = T_{n-1}\times T_{n-1}^{s_n}$, and
 thus
 $[\Delta_n(T_{n-1})\cdot T_{n-1}, s_n] \leq \Delta_n(T_{n-1}) \leq
 T_{n}$, as claimed.
\end{proof}
 
 \begin{proposition}\label{prop:normfact}
 Let  $H\unlhd  K  \le  \Sigma_{n-1}$  and
 $U \deq \Span{s_n} \ltimes \left( \Delta_n(K)\cdot (H\times
  H^{s_n}) \right)$. If we define
 \begin{itemize}
 \item $L\deq
  N_{\Sigma_{n-1}}(K)\cap N_{\Sigma_{n-1}}(H)$,
 \item $M\deq C_K(K/H)$,
 \end{itemize}
 then
 \[N_{\Sigma_n}(U)= \Span{s_n} \ltimes \left( \Delta_n(L)\cdot
  (M\times   M^{s_n})   \right).\]   Moreover,
 $M \unlhd L \le\Sigma_{n-1}$.
 \end{proposition}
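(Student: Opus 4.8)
The plan is to use the reduction set up just before the statement. Since $t_n=s_n$ belongs to the $\Span{s_n}$-factor of $U$, we have $U^{s_n}=U$, and applying Lemma~\ref{lem:two} to the decomposition $\Sigma_n=Q_n\rtimes\Span{s_n}$ reduces the problem to computing $N_{Q_n}(U\cap Q_n)$, once the hypothesis $[N_{Q_n}(U\cap Q_n),s_n]\le U$ has been verified. Accordingly I would organise the proof into four steps: identify $W\deq U\cap Q_n$; compute $N_{Q_n}(W)$ and match it with $\Delta_n(L)\cdot(M\times M^{s_n})$; verify the commutator hypothesis of Lemma~\ref{lem:two}; and check that $M\unlhd L$.

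Writing elements of $Q_n=\Sigma_{n-1}\times(\Sigma_{n-1})^{s_n}$ as pairs $(a,b^{s_n})$, the first step is routine: $\Delta_n(K)$ normalises $H\times H^{s_n}$ because $K$ normalises $H$, so $W=\Delta_n(K)\cdot(H\times H^{s_n})$ is a subgroup of $Q_n$ equal to $U\cap Q_n$, and multiplying the two factors out yields the description
\[
W=\Set{(a,b^{s_n})\mid a,b\in K,\ aH=bH},
\]
that is, $W$ is the preimage in $K\times K^{s_n}$ of the diagonal of $(K/H)\times(K/H)$.

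For the second step I would argue by double inclusion. The inclusion $\Delta_n(L)\cdot(M\times M^{s_n})\le N_{Q_n}(W)$ is a direct verification: $\Delta_n(L)$ normalises $W$ since $L$ stabilises both $K$ and $H$, and $M\times M^{s_n}$ normalises $W$ since $M=C_K(K/H)$ acts trivially on $K/H$ by conjugation. Conversely, given $(c,d^{s_n})\in N_{Q_n}(W)$, the two coordinate projections of $W$ are both $K$, and the two ``kernels'' $\Set{a\mid(a,1)\in W}$ and $\Set{b\mid(1,b^{s_n})\in W}$ both equal $H$; preserving these forces $c,d\in N_{\Sigma_{n-1}}(K)\cap N_{\Sigma_{n-1}}(H)=L$. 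The residual ``graph'' condition $aH=bH\Rightarrow a^cH=b^dH$ then says precisely that $c$ and $d$ induce the same automorphism of $K/H$, i.e.\ that $c^{-1}d$ lies in the kernel of the $L$-action on $K/H$; identifying this kernel with $M$ (equivalently, checking that an element of $L$ centralising $K/H$ already lies in $K$) places $c^{-1}d$ in $M$ and hence $(c,d^{s_n})$ in $\Delta_n(L)\cdot(M\times M^{s_n})$. The assertion $M\unlhd L$ is then immediate, since $L$ acts on $K/H$ and must preserve its centre $Z(K/H)$, hence the preimage $M$.

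The step I expect to be the main obstacle is the verification of the hypothesis of Lemma~\ref{lem:two}, namely $[N_{Q_n}(W),s_n]\le U$. A short computation gives $[(c,d^{s_n}),s_n]=(c^{-1}d,(d^{-1}c)^{s_n})$, an element of $Q_n$; by the description of $W$ above it lies in $U$ exactly when $c^{-1}d\in K$ and $(c^{-1}d)^2\in H$. As $c^{-1}d$ ranges over $M$, this amounts to showing that $M/H=Z(K/H)$ has exponent at most $2$ (and, for the identification of $N_{Q_n}(W)$ above, that any element of $L$ centralising $K/H$ already lies in $K$). I expect this to be exactly the point where the particular structure of the groups $K$ and $H$ occurring in the normalizer chain must be used, rather than the bare hypothesis $H\unlhd K$; once it is established, Lemma~\ref{lem:two} yields $N_{\Sigma_n}(U)=N_{Q_n}(W)\rtimes\Span{s_n}=\Span{s_n}\ltimes\bigl(\Delta_n(L)\cdot(M\times M^{s_n})\bigr)$, as claimed.
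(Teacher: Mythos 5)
Your proposal follows the strategy advertised in the paragraph preceding the proposition (reduce to $Q_n$ via Lemma~\ref{lem:two}), but the paper's actual proof does something subtly different, and the difference is precisely what closes the two holes you flag at the end. You compute $N_{Q_n}(U\cap Q_n)$, the normalizer in $Q_n$ of the subgroup $W=U\cap Q_n$ alone. The paper instead works with $N\deq N_{Q_n}(U)=N_{\Sigma_n}(U)\cap Q_n$, i.e.\ the elements of $Q_n$ normalizing \emph{all} of $U$; since $s_n\in U$ and $\Sigma_n=Q_n\rtimes\Span{s_n}$, one gets $N_{\Sigma_n}(U)=\Span{s_n}\ltimes N$ for free, with no appeal to Lemma~\ref{lem:two} and no commutator hypothesis to verify. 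The payoff is that for $x=(a,b^{s_n})\in N$ the element $[x,s_n]=(s_n^x)^{-1}s_n=(a^{-1}b,(b^{-1}a)^{s_n})$ automatically lies in $U\cap Q_n$, which yields $a^{-1}b\in K$ at the outset; combined with your (correct) deductions $a,b\in L$ and $a^{-1}b\in C_L(K/H)$, this gives $a^{-1}b\in C_L(K/H)\cap K=M$ and finishes the hard inclusion.

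Your route, by contrast, gets stuck at exactly the two points you identify, and they are not repairable from the stated hypotheses. First, $N_{Q_n}(W)$ equals $\Set{(c,d^{s_n})\mid c,d\in L,\ c^{-1}d\in C_L(K/H)}$, and there is no reason for an element of $L$ centralizing $K/H$ to lie in $K$; so $N_{Q_n}(W)$ is in general strictly larger than $\Delta_n(L)\cdot(M\times M^{s_n})$. Second, for such an element with $e=c^{-1}d\notin K$ one has $[(c,d^{s_n}),s_n]=(e,(e^{-1})^{s_n})\notin U$, so the hypothesis of Lemma~\ref{lem:two} fails and the reduction is simply unavailable; deferring this to ``the particular structure of the groups occurring in the normalizer chain'' does not prove the proposition as stated, which assumes only $H\unlhd K\le\Sigma_{n-1}$. (Your separate observation that one needs $(c^{-1}d)^2\in H$, i.e.\ that $M/H=Z(K/H)$ have exponent $2$, is a genuine point, but it concerns the \emph{easy} inclusion: $(m,1)$ with $m\in M$ conjugates $s_n$ to $(m^{-1},m^{s_n})s_n$, which lies in $U$ only if $m^2\in H$. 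The paper calls this inclusion ``straightforward'' without comment; the condition does hold in every instance where the proposition is applied, so this is a gap in the paper's generality rather than a defect specific to your argument.) The remaining pieces of your proposal --- the description of $W$, the derivation $c,d\in L$ from preserving projections and kernels, and the normality $M\unlhd L$ --- are correct and essentially match the paper.
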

 
 \begin{proof}
 The              inclusion
 $N_{\Sigma_n}(U) \ge \Span{s_n} \ltimes \left( \Delta_n(L)\cdot
  (M\times M^{s_n}) \right)$ is straightforward since every factor
 of the second member is contained in the first one.
 
 Note that $U\cap Q_n=\Delta_n(K)\cdot (H\times H^{s_n})$. Let us
 start considering the group
 \[N\deq N_{Q_n}(U) = N_{\Sigma_{n}}(U)\cap Q_n
  .\]   Let   $x=(a,b^{s_n})\in   N$,   where
 $a,b\in  \Sigma_{n-1}$.  
 First we note that $[x,s_n]=(a^{-1}b, (b^{-1}a)^{s_n})\in U\cap Q_n = \Delta_n(K)\cdot   (H\times
 H^{s_n})$. In particular, $a^{-1}b\in K$.
 
 Let
 $y=(h,1^{s_n})\in H\times H^{s_n}\le U\cap Q_n$, where  $h\in H$ and $1\in \Sigma_{n-1}$. 
 We have
 $y^x=(h^a,1^{s_n})\in   \Delta_n(K)\cdot   (H\times
 H^{s_n})=\Delta_n(K)\ltimes (H\times 1)$. Since
 $\Delta_n(K)\cap (H\times 1)=1$, then $h^a\in H$ for all
 $h\in H$ and so $a\in N_{\Sigma_{n-1}}(H)$.
 Similarly, we have that $b\in N_{\Sigma_{n-1}}(H)$.
 Now, letting $u=(k,k^{s_n})\in\Delta_n(K)$, we have
 \begin{eqnarray}\label{eq:norm_1}
  u^x=\left(k^a,(k^{b})^{s_n}\right)&=&\left(k^a,
           (k^a)^{s_n}\right)\cdot \left(1,((k^a)^{-1}k^b)^{s_n}\right) \\
          \notag & \in&
            \Delta_n(K)\cdot (H\times H^{s_n})\\
          \notag &=&\Delta_n(K)\ltimes (1\times
           H^{s_n}),
 \end{eqnarray}
 and so $a\in N_{\Sigma_{n-1}}(K)\cap N_{\Sigma_{n-1}}(H)=L$. Similarly, $b\in N_{\Sigma_{n-1}}(K)\cap N_{\Sigma_{n-1}}(H)=L$. Again by Eq.~\eqref{eq:norm_1}
 we have $b= am$ with $m=a^{-1}b\in C_L(K/H) \cap K=M$. It follows that \begin{equation*}
 x=(a,b^{s_n})=(a,a^{s_n})\cdot (1,m^{s_n}) \in \Delta_n(L)\cdot
 (M\times M^{s_n}). 
 \end{equation*}
 Hence $N \le \Delta_n(L)\cdot
 (M\times M^{s_n})$. As a consequence we have \[ N_{\Sigma_n}(U) =\Span{s_n}\ltimes N = \Span{s_n} \ltimes \bigl( \Delta_n(L)\cdot
 (M\times M^{s_n}) \bigr),\] as required.
 
  We also trivially have that $M\le L\le \Sigma_{n-1}$. If $m\in M$,
  $k\in K$ and $l\in L$ then 	
  \[
   [k,m^l]= l^{-1}\underbrace{[k^{l^{-1}},m]}_{\in H}l \in H, \] and
  therefore $m^l\in M$ and $M\unlhd L$.
 \end{proof}
 
 The following technical definition is necessary to provide a
 recursive construction for the normalizer chain of $T_n$ in
 $\Sigma_n$. 
 
\begin{definition}\label{def:CeD}
 For a given natural number $n$ we define the series
 $\Set{C^{k}_{n}}_{k\ge 0}$ and $\Set{D^{k}_{n}}_{k\ge 0}$
 recursively as follows:
 \begin{alignat*}{2}
 C^{0}_{n}&\deq T_n,&\ \ \\
 D^{0}_{n} &\deq N_{\Sigma_n}(C^{0}_{n})=U_n, \\
 C^{k}_{n} &\deq C_{D^{k-1}_{n}}\bigl(D^{k-1}_{n}/C^{k-1}_{n}\bigr) && \text{for $k\ge 1$},\\
 D^{k}_{n}  &\deq  N_{\Sigma_n}(C^{k-1}_{n})  \cap
 N_{\Sigma_n}(D^{k-1}_{n}) && \text{for $k\ge 1$}.
 \end{alignat*}
\end{definition}

\begin{proposition}\label{cor:Nkn}
 For each $k\ge 1$ we have that
 \begin{align*}
 N^{k}_{n} & =\Span{s_n} \ltimes \bigl(\Delta_n(D^{k}_{n-1}) \cdot \left(C^{k}_{n-1} \times (C^{k}_{n-1})^{s_n} \right) \bigr)\\
    & = \Span{s_n} \ltimes \bigl(\Delta_n(D^{k}_{n-1}) \ltimes \left(C^{k}_{n-1} \times \Set{1} \right) \bigr).
 \end{align*}
\end{proposition}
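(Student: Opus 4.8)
The plan is to prove the first identity by induction on $k \ge 0$, using Proposition~\ref{prop_Un} as the base case and reducing each inductive step to a single application of Proposition~\ref{prop:normfact}; the second identity will then follow by a short direct manipulation. Throughout I would carry along the auxiliary invariant $C^k_{n-1}\unlhd D^k_{n-1}$, which is simultaneously the hypothesis $H\unlhd K$ required to invoke Proposition~\ref{prop:normfact} and, conveniently, the conclusion ``$M\unlhd L$'' that it returns, so the invariant regenerates itself along the chain.

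For the base case $k=0$ I would just observe that $D^0_{n-1}=U_{n-1}$ and $C^0_{n-1}=T_{n-1}$, so that Proposition~\ref{prop_Un} already asserts the claimed equality for $N^0_n=U_n$, while $T_{n-1}\unlhd U_{n-1}=N_{\Sigma_{n-1}}(T_{n-1})$ secures the invariant. For the inductive step I would set $K\deq D^k_{n-1}$ and $H\deq C^k_{n-1}$; the inductive hypothesis identifies $N^k_n$ with the group $U=\Span{s_n}\ltimes(\Delta_n(K)\cdot(H\times H^{s_n}))$ of Proposition~\ref{prop:normfact}. Since $N^{k+1}_n=N_{\Sym(2^n)}(N^k_n)=N_{\Sigma_n}(N^k_n)$ by Theorem~\ref{th:Nk(Nk-1)}, that proposition computes $N^{k+1}_n$ as $\Span{s_n}\ltimes(\Delta_n(L)\cdot(M\times M^{s_n}))$ with $L=N_{\Sigma_{n-1}}(K)\cap N_{\Sigma_{n-1}}(H)$ and $M=C_K(K/H)$. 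The decisive bookkeeping step is to match these against Definition~\ref{def:CeD}: one reads off $L=N_{\Sigma_{n-1}}(C^k_{n-1})\cap N_{\Sigma_{n-1}}(D^k_{n-1})=D^{k+1}_{n-1}$ and $M=C_{D^k_{n-1}}(D^k_{n-1}/C^k_{n-1})=C^{k+1}_{n-1}$, which is exactly the asserted formula at $k+1$, and the ``Moreover'' clause returns $C^{k+1}_{n-1}\unlhd D^{k+1}_{n-1}$, keeping the induction alive.

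For the second identity I would argue that, writing $C=C^k_{n-1}\unlhd D=D^k_{n-1}$, one has $\Delta_n(D)\cdot(C\times C^{s_n})=\Delta_n(D)\ltimes(C\times\Set{1})$. Indeed, each element $(c,(c')^{s_n})$ factors as $\Delta_n(c')\cdot((c')^{-1}c,1)$ with $c'\in C\le D$, which gives the equality of products; the subgroup $C\times\Set{1}$ is normalized by $\Delta_n(D)$ because $\Delta_n(d)$ conjugates $(c,1)$ to $(c^d,1)$ and $C\unlhd D$; and $\Delta_n(D)\cap(C\times\Set{1})=\Set{1}$, since $(d,d^{s_n})=(c,1)$ forces $d=1$.

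I expect essentially all the genuine group theory to have been discharged already in Propositions~\ref{prop_Un} and~\ref{prop:normfact}, so the main obstacle here is organizational rather than conceptual: verifying that the centralizer-of-a-quotient and the intersection-of-normalizers appearing in Definition~\ref{def:CeD} line up verbatim with the $M$ and $L$ produced by Proposition~\ref{prop:normfact}, and ensuring that the normality invariant $C^k_{n-1}\unlhd D^k_{n-1}$ is in force at every $k$ so that Proposition~\ref{prop:normfact} may be legitimately applied at each link of the chain.
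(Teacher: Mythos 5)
Your proposal is correct and follows essentially the same route as the paper, whose proof is precisely a recursive application of Proposition~\ref{prop:normfact} with $H=C^{k-1}_{n-1}$, $K=D^{k-1}_{n-1}$, $L=D^{k}_{n-1}$, $M=C^{k}_{n-1}$, starting from $C^{0}_{n-1}=T_{n-1}\unlhd U_{n-1}=D^{0}_{n-1}$. You simply make explicit the bookkeeping the paper leaves implicit (the normality invariant propagated by the ``Moreover'' clause and the verification of the second, semidirect-product form of the identity).
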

\begin{proof} The result follows by a recursive application of 
 Proposition~\ref{prop:normfact},  assuming  $H=C^{k-1}_{n-1}$,
 $K=D^{k-1}_{n-1}$, $L=D^{k}_{n-1}$ and $M=C^{k}_{n-1}$,
 beginning with
 $C^{0}_{n-1}=T_{n-1}$ which is normal in $D^{0}_{n-1}=U_{n-1}$. 
\end{proof}

\subsection{The case $1\leq k \leq 4$}\label{sec_sub}
The main result of this work will be proved in this section. In order
to do so, let us denote by
$\Theta_n$ the group of the upper unitriangular matrices and by
$Z_h(\Theta_n)$ the $h$-th term of its upper central series.

By Proposition~\ref{prop_Un}
\[
U_n=\Span{s_n} \ltimes \bigl(\Delta_n(U_{n-1}) \ltimes \left(T_{n-1}
\times \Set{1} \right)\bigr)
\]
and \[T_n=\Span{s_n} \cdot \Delta_n(T_{n-1}).\] Hence 
\[
\Theta_n \cong U_n/T_n=
\Delta_n(U_{n-1}/T_{n-1}) \ltimes \left(T_{n-1} \times \Set{1}
\right)
= \Delta_n(\Theta_{n-1}) \ltimes \left(T_{n-1} \times \Set{1}
\right).
\]
Moreover, notice that $\Theta_{n}=U_{n-1}$. It is easily checked that
\[
 Z_1(\Theta_n)= T_{n-1,1} \times \Set{1}.
\]
Proceeding by induction we obtain the following generalization.
\begin{lemma}\label{lem:upper_central}
 We have that
 \[
 Z_h(\Theta_n)=\Delta_n(Z_{h-1}(\Theta_{n-1}))    \ltimes
 \left(T_{n-1,h} \times \Set{1} \right).
 \]
\end{lemma}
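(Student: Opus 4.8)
The plan is to prove the identity by induction on $h$, exploiting the characterization $Z_h(\Theta_n)/Z_{h-1}(\Theta_n) = Z\bigl(\Theta_n/Z_{h-1}(\Theta_n)\bigr)$ together with the recursive decomposition $\Theta_n = \Delta_n(\Theta_{n-1}) \ltimes (T_{n-1}\times\Set{1})$ just established. The base case $h\le 1$ is the identity $Z_1(\Theta_n) = T_{n-1,1}\times\Set{1}$ recorded above, which (with the conventions $Z_0(\Theta_{n-1}) = \Set{1}$ and $T_{n-1,0} = \Set{1}$) is an instance of the claimed formula. For the inductive step I assume the formula at level $h-1$, that is $Z_{h-1}(\Theta_n) = \Delta_n(Z_{h-2}(\Theta_{n-1})) \ltimes (T_{n-1,h-1}\times\Set{1})$, and I write a generic element of $\Theta_n$ as $x = \Delta_n(g)\cdot(v,1)$ with $g\in\Theta_{n-1}$ and $v\in T_{n-1}$, where $\Theta_{n-1}$ acts on $T_{n-1}$ by conjugation (i.e.\ as on its natural module). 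Since $\Set{\Delta_n(g')\mid g'\in\Theta_{n-1}}$ and $\Set{(w,1)\mid w\in T_{n-1}}$ together generate $\Theta_n$, the element $x$ lies in $Z_h(\Theta_n)$ if and only if $[x,(w,1)]$ and $[x,\Delta_n(g')]$ both lie in $Z_{h-1}(\Theta_n)$ for all $w$ and all $g'$.

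Next, a direct computation in the semidirect product gives the two commutator formulas
\[
[x,(w,1)] = \bigl((w^{-1})^{g}w,\,1\bigr) \in T_{n-1}\times\Set{1},
\]
\[
[x,\Delta_n(g')] = \Delta_n([g,g'])\cdot\bigl((v^{-1})^{[g,g']}v^{g'},\,1\bigr).
\]
Because the decomposition of $\Theta_n$ is an internal semidirect product, membership of these commutators in $Z_{h-1}(\Theta_n)$ (read off from the inductive form above) translates into three conditions: (I) $(g-1)T_{n-1}\subseteq T_{n-1,h-1}$; (II) $[g,g']\in Z_{h-2}(\Theta_{n-1})$ for every $g'$, equivalently $g\in Z_{h-1}(\Theta_{n-1})$; and (III) $(v^{-1})^{[g,g']}v^{g'}\in T_{n-1,h-1}$ for every $g'$. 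Writing $T_{n-1}$ additively as an $\mathbb{F}_2$-module, condition (III) becomes $v\,(g'-1) + v\,([g,g']-1)\in T_{n-1,h-1}$.

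To close the argument I would use two elementary facts about the natural module $T_{n-1}$ of $\Theta_{n-1}$, with $\Set{T_{n-1,i}}$ its unique invariant complete flag: \textbf{(A)} for $v\in T_{n-1}$ one has $v\in T_{n-1,h}$ iff $(g-1)v\in T_{n-1,h-1}$ for all $g\in\Theta_{n-1}$; and \textbf{(B)} if $g\in Z_m(\Theta_{n-1})$ then $(g-1)T_{n-1}\subseteq T_{n-1,m}$. Both are direct matrix computations: $Z_m(\Theta_{n-1})$ is the set of unitriangular matrices supported on entries $(i,j)$ with $j-i\ge (n-1)-m$, and such a matrix sends the flag down by exactly $m$ steps. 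Granting (A) and (B), condition (II) gives $g\in Z_{h-1}(\Theta_{n-1})$; then (B) makes (I) automatic, and since $[g,g']\in Z_{h-2}(\Theta_{n-1})$ it forces $v\,([g,g']-1)\in T_{n-1,h-2}\subseteq T_{n-1,h-1}$, so (III) collapses to $v\,(g'-1)\in T_{n-1,h-1}$ for all $g'$, which by (A) is exactly $v\in T_{n-1,h}$. Conversely, for $g\in Z_{h-1}(\Theta_{n-1})$ and $v\in T_{n-1,h}$ the same two facts verify (I), (II), (III). Hence $Z_h(\Theta_n) = \Set{\Delta_n(g)(v,1)\mid g\in Z_{h-1}(\Theta_{n-1}),\ v\in T_{n-1,h}} = \Delta_n(Z_{h-1}(\Theta_{n-1}))\ltimes(T_{n-1,h}\times\Set{1})$, completing the induction (the semidirect product being legitimate since $T_{n-1,h}$ is $\Theta_{n-1}$-invariant).

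The routine parts are the two commutator identities; \emph{the delicate part is the module bookkeeping}, namely keeping the flag indices and the upper-central indices aligned so that (I)--(III) combine to give precisely $g\in Z_{h-1}(\Theta_{n-1})$ and $v\in T_{n-1,h}$ rather than an off-by-one variant, and in particular recognizing that the a priori additional condition (I) from the $(w,1)$-commutators is subsumed by (II) through fact (B). The crucial structural input is that $T_{n-1}$ is the natural module of $\Theta_{n-1}$ with the $T_{n-1,i}$ as its flag; if one prefers not to invoke the matrix model, (A) and (B) can themselves be obtained by a parallel induction from the recursion, which is where the only genuine effort lies. As a consistency check, the formula yields $\log_2\Size{Z_h(\Theta_n)} = \log_2\Size{Z_{h-1}(\Theta_{n-1})} + h = h(h+1)/2$, independent of $n$, matching the known value for $\Theta_n$.
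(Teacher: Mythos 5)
Your proof is correct, but it follows a genuinely different route from the paper's. The paper's proof is two lines: writing $G_h$ for the right-hand side, it observes that $G_h/G_{h-1}$ is a central section of $\Theta_n$, so that $G_h\le Z_h(\Theta_n)$ by induction, and then concludes equality by counting, since $\Size{G_h:G_{h-1}}=2^h=\Size{Z_h(\Theta_n):Z_{h-1}(\Theta_n)}$, the latter index being quoted as known for the unitriangular group. You instead compute $Z\bigl(\Theta_n/Z_{h-1}(\Theta_n)\bigr)$ directly from the semidirect decomposition: your commutator identities are correct, the reduction to conditions (I)--(III) is sound, and the two module facts (A) and (B) about the natural module of $\Theta_{n-1}$ are true and do close the argument (in particular the absorption of (I) into (II) via (B), and the collapse of (III) to $v(g'-1)\in T_{n-1,h-1}$, both check out). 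What each approach buys: the paper's counting step is shorter but imports the precise layer sizes of the upper central series of the unitriangular group --- exactly the quantity you relegate to a consistency check --- whereas your argument proves both inclusions simultaneously and, since (A) and (B) can themselves be derived from the recursive decomposition, could be made independent of the matrix model altogether. Note also that your ``conversely'' direction is essentially the paper's central-section observation, so your proof strictly contains the paper's first half and replaces its second (counting) half with an explicit determination of the centre of the quotient.
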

\begin{proof}
 If
 $G_h\deq \Delta_n(Z_{h-1}(\Theta_{n-1})) \ltimes \left(T_{n-1,h}
 \times \Set{1} \right)$, then $G_h/G_{h-1}$ is a central section
 of $\Theta_n$, hence $G_{h}\leq Z_{h}(\Theta_n)$. Notice that
 $\Size{G_h:  G_{h-1}}=\Size{Z_{h}(\Theta_n):Z_{h-1}(\Theta_n)}$,
 which is known to be $2^h$. Therefore $Z_{h}(\Theta_n)=G_h$.
\end{proof}
We are now ready to prove our main result.
\begin{theorem}\label{thm:main}
 Let $n$ be a non-negative integer. Then Conjecture~\ref{conj:main}
 is true for $1 \leq k \leq 4$.
\end{theorem}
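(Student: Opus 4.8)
The plan is to turn the index $\log_2\Size{N^k_n:N^{k-1}_n}$ into a statement about the orders of the auxiliary groups $C^k_{m}$ and $D^k_{m}$ of Definition~\ref{def:CeD}, with $m=n-1$. Indeed, the internal semidirect product structure in Proposition~\ref{cor:Nkn} gives
\[
\log_2\Size{N^k_n}=1+\log_2\Size{D^k_{n-1}}+\log_2\Size{C^k_{n-1}},
\]
so that, writing $m=n-1$ (the hypothesis $n\ge k+2$ becoming $m\ge k+1$),
\[
\log_2\Size{N^k_n:N^{k-1}_n}=\bigl(\log_2\Size{D^k_{m}}-\log_2\Size{D^{k-1}_{m}}\bigr)+\bigl(\log_2\Size{C^k_{m}}-\log_2\Size{C^{k-1}_{m}}\bigr).
\]
It therefore suffices to compute, for $0\le k\le 4$, the orders of $C^k_m$ and $D^k_m$ and to check that the right-hand side equals $a_3=1$, $a_4=2$, $a_5=4$, $a_6=7$ for $k=1,2,3,4$ respectively. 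The starting values are $\log_2\Size{C^0_m}=\log_2\Size{T_m}=m$ and $\log_2\Size{D^0_m}=\log_2\Size{U_m}=\binom{m+1}{2}$.

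First I would handle the centralizer terms. Whenever $D^{k-1}_m=U_m$, the defining formula $C^k_m=C_{D^{k-1}_m}(D^{k-1}_m/C^{k-1}_m)$ reads $C^k_m=C_{U_m}(U_m/C^{k-1}_m)$, i.e.\ the preimage in $U_m$ of the centre of $U_m/C^{k-1}_m$. Since $C^0_m=T_m$ and $U_m/T_m\cong\Theta_m$, an induction then identifies $C^k_m$ with the preimage in $U_m$ of $Z_k(\Theta_m)$, and Lemma~\ref{lem:upper_central} gives $\log_2\Size{Z_k(\Theta_m)}=\sum_{i=1}^{k}i=k(k+1)/2$ (valid for $k\le m-1$, which is exactly the range $m\ge k+1$). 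Hence $\log_2\Size{C^k_m}=m+k(k+1)/2$, provided $D^{k-1}_m=U_m$; as shown next this holds for $k\le 3$, yielding $\log_2\Size{C^k_m}=m+1,\ m+3,\ m+6$ for $k=1,2,3$.

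The normalizer terms $D^k_m$ are the delicate point. From $N_{\Sigma_m}(T_m)=U_m$ one reads $D^1_m=N_{\Sigma_m}(T_m)\cap N_{\Sigma_m}(U_m)=U_m$, and in general $D^k_m=N_{\Sigma_m}(C^{k-1}_m)\cap N_{\Sigma_m}(D^{k-1}_m)$ always contains $U_m$. Since $\Size{N^1_m:U_m}=2$, the whole question at each step is whether the nontrivial coset of $U_m$ in $N^1_m$ (and, later, the cosets inside $N^2_m$) normalizes the preimage $C^{k-1}_m$. I would show that the elements of $N^1_m\setminus U_m$ \emph{fail} to normalize $C^1_m$ (the preimage of $Z_1(\Theta_m)$) but \emph{do} normalize $C^2_m$ (the preimage of $Z_2(\Theta_m)$): this gives $D^2_m=U_m$ and $D^3_m=N^1_m$, hence $\log_2\Size{D^k_m}=\binom{m+1}{2}$ for $k\le 2$ and $\log_2\Size{D^3_m}=\binom{m+1}{2}+1$. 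Combining with the previous paragraph, the increments for $k=1,2,3$ are $0+1$, $0+2$ and $1+3$, matching $a_3,a_4,a_5$.

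The case $k=4$ is the heaviest, and I expect it to be the main obstacle. Here $D^3_m=N^1_m\supsetneq U_m$, so the clean identification breaks down: $C^4_m=C_{N^1_m}(N^1_m/C^3_m)$ must be computed inside $N^1_m$, and may acquire elements of the coset $N^1_m\setminus U_m$, becoming strictly larger than the preimage of $Z_4(\Theta_m)$; simultaneously $D^4_m=N_{\Sigma_m}(C^3_m)\cap N^2_m$ forces one to descend a further level and analyse how the cosets of $N^1_m$ in $N^2_m$ act on $C^3_m$. The target is to prove that these two contributions combine to $\log_2\Size{D^4_m}+\log_2\Size{C^4_m}=\binom{m+1}{2}+m+14$, i.e.\ an increment of $a_6=7$ over $k=3$. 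Concretely I would carry this out by writing the relevant elements explicitly through the decomposition $\Sigma_m=Q_m\rtimes\Span{s_m}$ underlying Proposition~\ref{prop:normfact}, reducing every verification to commutator computations modulo the flag terms $T_{m,i}$ and the upper central series of $\Theta_m$. The increasingly intricate bookkeeping of which elements survive these conditions is exactly the elementary but cumbersome analysis that obstructs a uniform treatment for general $k$.
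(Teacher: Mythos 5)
Your framework is exactly the paper's: you use Proposition~\ref{cor:Nkn} to write $\log_2\Size{N^k_n}=1+\log_2\Size{D^k_{n-1}}+\log_2\Size{C^k_{n-1}}$, identify $C^k_m$ with the preimage in $U_m$ of $Z_k(\Theta_m)$ for as long as $D^{k-1}_m=U_m$, and track when the groups $D^k_m$ jump from $U_m$ to $N^1_m$ to $N^2_m$. The bookkeeping (increments $0+1$, $0+2$, $1+3$, $2+5$) is correct, and so is your qualitative prediction that $C^4_m$ strictly exceeds the preimage of $Z_4(\Theta_m)$ by absorbing elements of $N^1_m\setminus U_m$ --- this is precisely why the fourth increment is $7$ and not $6$.

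The problem is that the proposal defers exactly the steps that carry the mathematical content. (i) $D^2_m=U_m$ requires showing that no $g\in N^1_m\setminus U_m$ normalizes $C^1_m$; the paper gets this in one line from $\Size{T_m\cdot T_m^{\,g}}=2^{m+2}>2^{m+1}=\Size{C^1_m}$, but you only announce the claim. (ii) $D^3_m=N^1_m$ requires verifying that $N^1_m$ normalizes $Z_2(\Theta_m)\ltimes T_m$; the paper does this either through a list of commutator inclusions, resting on the fact that $T_m^{\,g}\le Z_2(\Theta_m)\ltimes T_m$ (itself a consequence of uniseriality of $T_m^{\,g}$ as a $U_m$-module and $[[T_m^{\,g},U_m],U_m]\le T_m^{\,g}\cap T_m$), or more slickly via $Z_2(\Theta_m)\ltimes T_m=Z_3(U_m)\cdot T_m$ with $Z_3(U_m)$ characteristic in $U_m$; neither argument appears in your text. (iii) Most importantly, for $k=4$ the entire computation is left as a stated target: one must actually compute
\[
C^4_m=\bigl(\Delta_m(Z_3(\Theta_{m-1}))\ltimes\bigl((Z_1(\Theta_{m-1})\ltimes T_{m-1,4})\times\Set{1}\bigr)\bigr)\ltimes T_m,
\]
which is where the index $\Size{C^4_m:C^3_m}=2^5$ (rather than $2^4$) comes from, and verify $D^4_m=N^2_m$ by checking that $N^2_m$ normalizes $Z_3(\Theta_m)\ltimes T_m$. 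Since the content of the theorem for $k=4$ \emph{is} that computation, what you have is a correct and well-aimed plan, identical in strategy to the paper's proof, but not yet a proof: the centralizer computation inside $N^1_m$ and the normalizer verification inside $N^2_m$ still have to be carried out along the lines you sketch.
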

\begin{proof}
 Let us prove each case separately. We will use Proposition~\ref{prop:normfact} repeatedly without further mention.
%%%%%%%%%%%%%%%%%%%%%%%%%%%%%%%%%%
 %%%%%%%%%%% K = 0 %%%%%%%%%%%%%%%%%%%
%%%%%%%%%%%%%%%%%%%%%%%%%%%%%%%%%%
 Since $C^{0}_{n}=T_n$ and $D^{0}_{n}=U_n$, by
 Proposition \ref{prop_Un} we have
 \[
 N^{0}_{n} =\Span{s_n} \ltimes \bigl(\Delta_n(U_{n-1}) \ltimes
 \left(T_{n-1} \times \Set{1} \right)\bigr)=U_n.
 \]
 \begin{enumerate}
%%%%%%%%%%%%%%%%%%%%%%%%%%%%%%%%%%
 %%%%%%%%%%% K = 1 %%%%%%%%%%%%%%%%%%%
%%%%%%%%%%%%%%%%%%%%%%%%%%%%%%%%%%
 \item[$\mathbf{[k=1]}$]          Since
 $C^{1}_{n}=C_{U_n}(U_n/T_n)=Z_1(\Theta_n)\ltimes T_n $  and
 $D^{1}_{n}=U_n \cap N_{\Sigma_n}(U_n)=U_n$, we obtain
$$
N^{1}_{n} =\Span{s_n} \ltimes \bigl(\Delta_n(U_{n-1}) \ltimes
\left((Z_1(\Theta_{n-1})\ltimes T_{n-1}) \times \Set{1} \right)\bigr),
$$
and so $\Size{N^{1}_{n}:N^{0}_{n}}=\Size{N^{1}_{n}:U_{n}}=2=2^1$, since $\Size{Z_1(\Theta_{n-1})}=2$.
%%%%%%%%%%%%%%%%%%%%%%%%%%%%%%%%%%
%%%%%%%%%%% K = 2 %%%%%%%%%%%%%%%%%%%
%%%%%%%%%%%%%%%%%%%%%%%%%%%%%%%%%%
\item[$\mathbf{[k=2]}$]      We     have
 $C^{2}_{n}=C_{U_n}\bigl(U_n/\left(Z_1(\Theta_n)\ltimes T_n
 \right)\bigr)=Z_2(\Theta_n)\ltimes T_n$ and
 \begin{align*}
 D^{2}_{n} & =N_{\Sigma_n}(C^{1}_{n}) \cap N_{\Sigma_n}(U_n)\\
    & =N_{\Sigma_n}(Z_1(\Theta_n)\ltimes T_n) \cap N^{1}_{n} \\
    & =N_{N^{1}_{n}}(Z_1(\Theta_n)\ltimes T_n)\\
    &=U_n.
 \end{align*}
 The last equality depends on the fact that
 \[
 \Size{T_n \cdot T_n^g} = 2^{n+2},
 \] where $g \in N^{1}_{n} \setminus U_n$ from \cite[Corollary
 3]{Aragona2019},      and      that
 $\Size{(Z_1(\Theta_n)\ltimes T_n)} = 2^{n+1}$. We consequently
 obtain that
 \[
 N^{2}_{n} =\Span{s_n} \ltimes \bigl(\Delta_n(U_{n-1}) \ltimes
 \left(( Z_2(\Theta_{n-1})\ltimes T_{n-1} ) \times  \Set{1}
 \right)\bigr),
 \]
 and so $\Size{N^{2}_{n}:N^{1}_{n}}=2^2=4$,
 since $\Size{Z_2(\Theta_{n-1}):Z_1(\Theta_{n-1})}=2^2$.
%%%%%%%%%%%%%%%%%%%%%%%%%%%%%%%%%%
 %%%%%%%%%%% K = 3 %%%%%%%%%%%%%%%%%%%
%%%%%%%%%%%%%%%%%%%%%%%%%%%%%%%%%%
\item[$\mathbf{[k=3]}$]   We    have   that
 $C^{3}_{n}=C_{U_n}(U_n/\bigl( Z_2(\Theta_n) \ltimes T_n
 )\bigr)= Z_3(\Theta_n) \ltimes T_n$ and
 \begin{align*}
 D^{3}_{n} & =N_{\Sigma_n}(Z_2(\Theta_n) \ltimes T_n) \cap N_{\Sigma_n}(U_n)\\
    & =N_{\Sigma_n}(Z_2(\Theta_n) \ltimes T_n) \cap N^{1}_{n}\\
    & =N_{N^{1}_{n}}(Z_2(\Theta_n) \ltimes T_n) \\
    & =N^{1}_{n}.
 \end{align*}
 In order to prove last equality, we first show that
 $[N^{1}_{n},T_n] \leq Z_2(\Theta_n) \ltimes T_n $. For each
 $g \in N^{1}_{n} \setminus U_n$, we have that $[[T_n^g,U_n],U_n]$ is
 a normal subgroup of $U_n$ of index at least $4$ in $T_n^g$. Since $T_n^g$ is
 uniserial for $U_n$, then $[[T_n^g,U_n],U_n]$ is necessarily
 contained in the unique subgroup of index $4$ in $T_n^g$ and normal in
 $U_n$, which is $T_n^g\cap T_n$ (see \cite{Aragona2019}). Hence
 $(T_n\cdot T_n^g)/T_n$ lies in the second term of the upper central series of the quotient $U_n/T_n=\Theta_n$. Thus 
 $T_n^g\leq Z_2(\Theta_n) \ltimes T_n$.
 We   are   left  with   proving   that
 $[N^{1}_{n},Z_2(\Theta_n)] \leq Z_2(\Theta_n) \ltimes T_n$, which is
 a direct consequence of the following straightforward properties:
 \begin{itemize}
 	\item $Z_2(\Theta_n)=\Delta_n(Z_{1}(\Theta_{n-1}))    \ltimes
 	\left(T_{n-1,2} \times \Set{1} \right)$ (Lemma~\ref{lem:upper_central});
 \item
 $[s_n, \Delta_n(Z_1(\Theta_{n-1})) \ltimes \left(T_{n-1,2} \times
  \Set{1} \right)] \leq T_n$;
 \item
 $[\Delta_n(U_{n-1}), \Delta_n(Z_1(\Theta_{n-1})) ] \leq T_n$;
 \item
 $[\Delta_n(U_{n-1}), T_{n-1,1} \times \Set{1} ]\leq T_{n-1,2}
 \times \Set{1} \leq Z_2(\Theta_n)$;
 \item
 $[ Z_1(\Theta_{n-1} )\ltimes T_{n-1}, \Delta_n(Z_1(\Theta_{n-1}))
 ]\leq T_{n-1,1} \times \Set{1} = Z_1(\Theta_n)\leq Z_2(\Theta_n)$;
 \item 
 $[Z_1(\Theta_{n-1}) \ltimes T_{n-1}, T_{n-1,2} \times \Set{1} ]=
 \Set{1}$.
 \end{itemize}
 In conclusion, we derive that
$$
N^{3}_{n} =\Span{s_n} \ltimes \bigl(\Delta_n(N^{1}_{n-1}) \ltimes
\left(( Z_3(\Theta_{n-1}) \ltimes T_{n-1} ) \times \Set{1} \right)\bigr),
$$
and so $\Size{N^{3}_{n}:N^{2}_{n}}=2^4=16$,
as $\Size{N^{1}_{n-1}:U_{n-1}}=2^1$ and
$\Size{Z_3(\Theta_{n-1}):Z_2(\Theta_{n-1})}=2^3$.
The same result can be also obtained as follows. 
Note that $Z_2(\Theta_n)\ltimes T_n =Z_{3}(U_{n})\cdot T_n$. Since $Z_{3}(U_{n})$ is a characteristic subgroup of $U_n$ we have  $(Z_2(\Theta_n)\ltimes T_n)^x =(Z_{3}(U_{n})\cdot T_n)^x= Z_{3}(U_{n})\cdot T_n^x \le Z_2(\Theta_n)\cdot T_n\cdot T_n^x= Z_2(\Theta_n)\ltimes T_n$ for all $x\in N_n^1$.
As a consequence $[N^{1}_{n},Z_2(\Theta_n)] \le [N^{1}_{n},Z_2(\Theta_n)T_n] \leq Z_2(\Theta_n) \ltimes T_n$.
%%%%%%%%%%%%%%%%%%%%%%%%%%%%%%%%%%
%%%%%%%%%%% K = 4 %%%%%%%%%%%%%%%%%%%
%%%%%%%%%%%%%%%%%%%%%%%%%%%%%%%%%%
\item[$\mathbf{[k=4]}$]
We mimic the argument provided for the case $k=3$.
We start by computing
 \begin{align*}
 C^{4}_{n} & =C_{N^{1}_{n}}\bigl(N^{1}_{n}/(Z_3(\Theta_n)\ltimes T_n)\bigr)\\
    & = C_{M}\left(\frac{\Delta_n(\Theta_{n-1})\ltimes \bigl((Z_1(\Theta_{n-1}) \ltimes T_{n-1}) \times \Set{1} \bigr)}{\Delta_n(Z_2(\Theta_{n-1}))\ltimes \bigl((Z_1(\Theta_{n-1}) \ltimes T_{n-1,3}) \times \Set{1}\bigr)}\right)\ltimes T_n\\
    &= \bigl(\Delta_n(Z_3(\Theta_{n-1}))\ltimes \bigl((Z_1(\Theta_{n-1}) \ltimes T_{n-1,4}) \times \Set{1}\bigr)\bigr)\ltimes T_n,
 \end{align*}
 where
 $M\deq  \Delta_n(\Theta_{n-1})\ltimes  \bigl((Z_1(\Theta_{n-1}) \ltimes T_{n-1}) \times \Set{1} \bigr)$. Moreover
 \begin{align*}
 D^{4}_{n} & =N_{\Sigma_n}(Z_3(\Theta_n)\ltimes T_n ) \cap N_{\Sigma_n}(N^{1}_{n})\\
    & =N_{\Sigma_n}(Z_3(\Theta_n)\ltimes T_n ) \cap N^{2}_{n}\\
    & =N_{N^{2}_{n}}(Z_3(\Theta_n)\ltimes T_n )\\
    & =N^{2}_{n}.
 \end{align*}
The last equality is derived proceeding as in the case $k=3$, provided
 that $n$ is sufficiently large (e.g.~$n\geq 5$). In conclusion, we
 derive that
 \[
 N^{4}_{n} =\Span{s_n} \ltimes \bigl(\Delta_n(N^{2}_{n-1}) \ltimes
 \left(C^{4}_{n-1} \times \Set{1} \right)\bigr),
 \]
 and  so  $\Size{N^{4}_{n}:N^{3}_{n}}=2^7$. Indeed, $\Size{N^{2}_{n-1}:N^{1}_{n-1}}=2^2$ and
 $\Size{C^{4}_{n-1}:C^3_{n-1}}=2^5$, since
 \[
 C^{4}_{n-1}=\bigl(\Delta_{n-1}(Z_3(\Theta_{n-2}))\ltimes
 \bigl((Z_1(\Theta_{n-2})\ltimes  T_{n-2,4} )   \times
 \Set{1}\bigr)\bigr)\ltimes T_{n-1}
 \]
 and
 \[
 C^{3}_{n-1}=\bigl(\Delta_{n-1}(Z_{2}(\Theta_{n-2}))  \ltimes
 \left(T_{n-2,3} \times \Set{1} \right)\bigl)\ltimes T_{n-1},
 \]
 where    $\Size{Z_3(\Theta_{n-2}):Z_2(\Theta_{n-2})}=2^3$,
 $\Size{T_{n-2,4},T_{n-2,3}}=2$ and $\Size{Z_1(\Theta_{n-2})}=2$.
\end{enumerate}
\bigskip

Finally, notice that, if $n\ge k+2$, the construction of $N^{k}_{n}$ described above
does not depend on the dimension $n$ and the integers corresponding to
$\log_2\Size{N^{k}_{n}:N^{k-1}_{n}}$ for $1 \leq k \leq 4$ are
respectively the 3-rd, the 4-th, the 5-th and the 6-th term of the
sequence $\Set{a_j}$ of the partial sums of the sequence $\Set{b_j}$
counting the number of partitions of $j$ into at least two distinct
parts (see Fig.~\ref{tab:one} and the auxiliary column of Fig.~\ref{table1}). Therefore, Conjecture~\ref{conj:main}
is true for $1 \leq k \leq 4$.
\end{proof}
Although we have a recursive method, it appears that the construction
of $N^{k}_{n}$ requires \emph{ad hoc} computations that become
increasingly complex as $k$ grows. 
%Different techniques are very likely required in order to have a deeper insight.

\begin{openprob}
 Find a closed and concise formula for $N^{k}_{n}$.
\end{openprob}

Moreover, even though the sequence of the indices
$\log_2\Size{N_n^{k}:N_n^{k-1}}$ seems to be predictable for $k \leq n-2$,
as conjectured in this paper, it is hard to figure any conjecture on the values appearing under the bold diagonal of the table in Fig.~\ref{table1}.

\begin{openprob}
 Determine $\log_2\Size{N_n^{k}:N_n^{k-1}}$ for all natural numbers $k$ and $n$.
\end{openprob}

\section{Some computational aspects}\label{sec_construction}
We can derive from Proposition~\ref{prop_Un}  the following efficient
construction of $U_n$, which has been useful to speed up the process
of computing the normalizer chain.
\\

The center $Z(U_n)$ is the subgroup $\Span{t^n_1}$, which is actually
the center of $\Sigma_n$. Let
\[
 u^n_{1,j} \deq t^{n}_{n-j+1} \text{ for $1\le j\le n$}, \quad
 u^n_{2,j} \deq u^{n-1}_{1,j-1} \text{ for $2\le j\le n$},\] and for
$3\le i\le j\le n$
\[ u^n_{i,j} \deq u^{n-1}_{i-1,j-1}(u^{n-1}_{i-1,j-1})^{s_n}.\]

Using this notation, it easy to recognize that
\begin{align*}
 U_1&=T_1,\\
 U_2&=\Span{u^2_{1,1},u^2_{1,2},u^2_{2,2}},\\
 U_3&=\Span{u^3_{1,1},u^3_{1,2},u^3_{1,3}, u^3_{2,2}, u^3_{2,3}, u^3_{3,3}},\\
  &\ \ \vdots \\
 U_n&=\Span{u^n_{i,j} \mid 1\le i \le j \le n}.
\end{align*}
\medskip

As an example of this construction, we conclude the paper by showing
the \textsf{GAP} code which we used to build the normalizer chains
displayed in Fig.~\ref{table1}. The orders of the normalizer are also
provided. The code below is specialized to the case $n=8$.\bigskip

\begin{lstlisting}[language=GAP]
dim:=8;

gens:=[]; 
# will contain generators for T_n 
 
ngens:=[]; 
# will contain generators for U_n 
 
sgens:=[]; 
# will contain generators for Sigma_n

# construction of the previous list 
for i in [1..dim] do 
	x:=(); 
	for j in [1..2^(i-1)] do 
		x:=x*(j,j+2^(i-1)); 
	od; 
	newgens:=[];
	newngens:=[]; 
	for y in gens do 
		Add(newgens, y*y^x); 
	od; 
	for y in ngens do
		Add(newngens, y*y^x); 
		Append(newngens,gens); 
	od;
	newgens:=Set(newgens); 
	newngens:=Set(newngens); 
	gens:=newgens;
	ngens:=newngens; 
	Add(gens, x); 
	Add(ngens,x); 
	Add(sgens,x);
	tmpsyl:=Group(ngens);   
	ngens:=MinimalGeneratingSet(tmpsyl);
	tmpsyl:=false; 
od;
 
t:=Group(gens); # the group T_n 
sigma:=Group(sgens); # the group
Sigma_n u:= Group(ngens); # the group U_n
 
sym:=SymmetricGroup(2^dim); 
n:=u;

# here the normalizer chain computation starts
sz:=Collected(Factors(Size(n))); 
# the orders of the normalizer chain as power of 2
       
lst:=[[t,Collected(Factors(Size(t)))],[n,sz]]; 
# will contain the normalizers and their orders

flag:=true; 
Print(Collected(Factors(Size(u))));

# construction of the normalizers and order display 
while flag do
	m:=n;  
	n:=Normalizer(sym,m);  
	if n<>m then
		sz:=Collected(Factors(Size(n))); 
		Print(sz,"\n"); Add(lst,[n,sz]);
	else 
		flag:=false; 
	fi; 
od;
\end{lstlisting}

\section*{Acknowledgment}
We are thankful to the staff of the Department of Information Engineering, Computer Science and Mathematics at the University of L'Aquila for helping us in managing the HPC cluster CALIBAN, which we extensively used to run our simulations (\url{caliband.disim.univaq.it}). We are also grateful to the \emph{Istituto Nazionale d'Alta Matematica - F.\ Severi} for regularly hosting our research seminar \emph{Gruppi al Centro} in which this paper was conceived.

\bibliographystyle{alpha} \bibliography{sym2n_ref}

\end{document}